\newcommand*{\I}{{\mathsf{id}}}
\newcommand*{\G}{{\mathcal{G}}}
\newcommand*{\g}{{\mathfrak{g}}}
\newcommand*{\M}{{\mathcal{M}}}
\newcommand*{\R}{{\mathbb{R}}}
\newcommand*{\X}{{\mathfrak{X}}}
\newcommand*{\oo}{{\mathsf{o}}}
\newcommand*{\Ad}{{\mathrm{Ad}}}
\newcommand*{\ra}{{\rightarrow}}
\title{Stochastic Lie group integrators}
\author{Simon J.A. Malham\thanks{Maxwell Institute 
for Mathematical Sciences and School of Mathematical and Computer Sciences, 
Heriot-Watt University, Edinburgh EH14 4AS, UK. (\texttt{S.J.Malham@ma.hw.ac.uk},
\texttt{A.Wiese@hw.ac.uk}). (16/10/2007)} \and Anke Wiese$^\ast$}
\begin{document}
\maketitle
\label{firstpage}

\begin{abstract}
We present Lie group integrators for nonlinear 
stochastic differential equations with non-commutative vector fields
whose solution evolves on a smooth finite dimensional manifold.
Given a Lie group action that generates transport along the manifold,
we pull back the stochastic flow on the manifold to the Lie group
via the action, and subsequently pull back the flow 
to the corresponding Lie algebra via the exponential map.
We construct an approximation to the stochastic flow in the Lie algebra
via closed operations and then push back to the Lie group and 
then to the manifold, thus ensuring our approximation lies in 
the manifold. We call such schemes stochastic Munthe-Kaas methods
after their deterministic counterparts. We also  
present stochastic Lie group integration schemes based 
on Castell--Gaines methods. These involve using an 
underlying ordinary differential integrator
to approximate the flow generated by a truncated 
stochastic exponential Lie series. They 
become stochastic Lie group integrator schemes if we use
Munthe-Kaas methods as the underlying ordinary differential 
integrator. Further, we show that some Castell--Gaines methods are 
uniformly more accurate than the corresponding 
stochastic Taylor schemes. Lastly we demonstrate
our methods by simulating the dynamics of a free rigid body
such as a satellite and an autonomous underwater vehicle both 
perturbed by two independent multiplicative stochastic noise processes.
\end{abstract}

\begin{keywords} 
stochastic Lie group integrators, 
stochastic differential equations on manifolds
\end{keywords}

\begin{AMS}
60H10, 60H35, 93E20
\end{AMS}

\pagestyle{myheadings}
\thispagestyle{plain}
\markboth{Malham and Wiese}{Stochastic Lie group integrators}

\section{Introduction}
We are interested in designing Lie group numerical schemes
for the strong approximation 
of nonlinear Stratonovich stochastic differential
equations of the form
\begin{equation}\label{sde}
y_t=y_0+\sum_{i=0}^d\int_0^t V_i(y_\tau,\tau)\,\mathrm{d}W_\tau^i\,.
\end{equation}
Here $W^1,\ldots,W^d$ are $d$ independent scalar Wiener processes
and $W_t^0\equiv t$. We suppose that the solution $y$ 
evolves on a smooth $n$-dimensional submanifold $\mathcal M$ of $\mathbb R^N$ 
with $n\leq N$ and
$V_i\colon\M\times\R_+\rightarrow T\mathcal{M}$, 
$i=0,1,\ldots,d$,
are smooth vector fields which in local coordinates are 
$V_i=\sum_{j=1}^n V_i^j\partial_{y_j}$.
The flow-map $\varphi_t\colon\mathcal M\rightarrow\mathcal M$ 
of the integral equation~\eqref{sde} is defined as the map taking the
initial data $y_0$ to the solution $y_t$ at time~$t$, 
i.e.\ $y_t=\varphi_t\circ y_0$. 

Our goal in this paper is to show how the Lie group integration
methods developed by Munthe-Kaas and co-authors can be extended
to stochastic differential equations on smooth manifolds
(see Crouch and Grossman~\cite{CG} and Munthe-Kaas~\cite{MK}).
Suppose we know that the exact solution of a given system of
stochastic differential equations evolves on a smooth
manifold $\M$ (see Malliavin~\cite{Malliavin} or Emery~\cite{Emery}), 
but we can only find the solution
pathwise numerically. How can we ensure that our 
approximate numerical solution also lies in the manifold? 

Suppose we are given a finite dimensional Lie group $\G$
and Lie group action $\Lambda_{y_0}$ that generates transport across 
the manifold $\M$ from the starting point $y_0\in\M$ 
via elements of $\G$. Then with any given elements $\xi$
in the Lie algebra $\g$ corresponding to the Lie group $\G$,
we can associate the infinitesimal action 
$\lambda_\xi$ using the Lie group action $\Lambda_{y_0}$.
The map $\xi\mapsto\lambda_\xi$ is a Lie algebra 
homomorphism from $\g$ to $\X(\M)$, the Lie algebra
of vector fields over the manifold $\M$. Further 
the Lie subalgebra $\{\lambda_\xi\in\X(\M)\colon\xi\in\g\}$ 
is isomorphic to a finite dimensional Lie algebra with the 
same structure constants (see Olver~\cite{O2}, p.~56).

Conversely, suppose we know that the Lie algebra generated by
the set of governing vector fields $V_i$, $i=0,1,\ldots,d$,
on $\M$ is finite dimensional, call this $\X_F(\M)$. Then
we know there exists a finite dimensional Lie group $\G$
whose Lie algebra $\g$ has the same structure constants 
as $\X_F(\M)$ relative to some basis, and there is a 
Lie group action $\Lambda_{y_0}$ such that 
$V_i=\lambda_{\xi_i}$, $i=0,1,\ldots,d$, for some $\xi_i\in\g$
(see Olver~\cite{O2}, p.~56 or Kunita~\cite{Ku1990}, p.~194).
The choice of group and action is not unique. 

In this paper we assume that there is a finite dimensional
Lie group $\G$ and action $\Lambda_{y_0}$ such that
our set of governing vector fields $V_i$, $i=0,1,\ldots,d$, 
are each infinitesimal Lie group actions generated by
some element in $\g$ via $\Lambda_{y_0}$, 
i.e.\ $V_i=\lambda_{\xi_i}$ for some 
$\xi_i\in\g$, $i=0,1,\ldots,d$. 
They are said to be fundamental vector fields.
This means that we can write down the set of 
governing vector fields $X_{\xi_i}$ for a system 
of stochastic differential equations on the 
Lie group $\G$ that, via the Lie group action $\Lambda_{y_0}$, 
generates the flow governed by the set of 
vector fields $V_i$ on the manifold. 
The vector fields $V_i$ on $\M$ are simply the push forward
of the vector fields $X_{\xi_i}$ on $\G$ via the 
Lie group action $\Lambda_{y_0}$.
Typically the flow on the Lie group also needs 
to be computed numerically. We thus 
want the approximation to remain
in the Lie group so that the Lie group action 
takes us back to the manifold.

To achieve this, we pull back the set of 
governing vector fields 
$X_{\xi_i}$ on $\G$ to the set of governing 
vector fields $v_{\xi_i}$ on $\g$, via the 
exponential map `$\exp$' from $\g$ to $\G$.
Thus the stochastic flow generated on $\g$ 
by the vector fields $v_{\xi_i}$ generates the stochastic flow 
on $\G$ generated by the $X_{\xi_i}$. The 
set of governing vector fields on $\g$ are 
for each $\sigma\in\g$:
\begin{equation}\label{vexplicit}
v_{\xi_i}\circ\sigma\equiv
\sum_{k=0}^\infty \frac{B_k}{k!}\,(\mathrm{ad}_\sigma)^k\circ\xi_i\,,
\end{equation}
where $B_k$ is the $k$th Bernoulli number and 
the adjoint operator $\mathrm{ad}_\sigma$ 
is a closed operator on $\g$, in fact 
$\mathrm{ad}_\sigma\circ\zeta=[\sigma,\zeta]$, 
the Lie bracket on $\g$. Now the essential point is
that $\xi_i\in\g$ and so the 
series on the right or any truncation of it is
closed in $\g$. Hence if we construct an approximation
to our stochastic differential equation on $\g$ using
the vector fields $v_{\xi_i}$ or an approximation of them
achieved by truncating the series representation, then
that approximation must reside in the Lie algebra $\g$.
We can then push the approximation
in the Lie algebra forward onto the Lie group and then onto the
manifold. Provided we compute the exponential map 
and action appropriately, our approximate solution
lies in the manifold (to within machine accuracy).
In summary, for a given $\xi\in\g$ and any $y_0\in\M$ we have
the following commutative diagram:
\begin{equation*}
\begin{CD} 
\g @ >\exp_\ast>>  \X(\G)@ >(\Lambda_{y_0})_\ast>> \X(\M) \\ 
   @ A v_\xi AA   @ AA X_\xi A @ AA \lambda_\xi A  \\ 
\g @ >\exp>>       \G    @ >\Lambda_{y_0}>>  \M      
\end{CD} 
\end{equation*}

We have implicitly separated the governing set of vector fields 
$V_i$, $i=0,1,\ldots,d$, from the driving path process 
$w\equiv(W^1,\ldots,W^d)$. Together they generate
the unique solution process $y\in\M$ 
to the stochastic differential equation~\eqref{sde}.
When there is only one driving Wiener process ($d=1$) 
the It\^o map $w\mapsto y$ is
continuous in the topology of uniform convergence.
When there are two or more driving processes ($d\geq2$)
the Universal Limit Theorem tells us that
the It\^o map $w\mapsto y$ is continuous in the 
$p$-variation topology, in particular for 
$2\leq p<3$ (see Lyons~\cite{L}, Lyons and Qian~\cite{LQ} and 
Malliavin~\cite{Malliavin}). A Wiener
path with $d\geq2$ has finite $p$-variation for $p>2$.
This means that from a pathwise perspective, 
approximations to $y$ constructed using 
successively refined approximations to $w$ 
are only guaranteed to converge to the correct
solution $y$, if we include information about
the L\'evy chordal areas of the driving path
process. Note however that the $L^2$-norm of
the $2$-variation of a Wiener process is finite. 
In the Lie group integration procedure 
prescribed above we must solve
a stochastic differential system on the 
Lie algebra $\g$ defined by the set of 
governing vector fields $v_{\xi_i}$ 
and the driving path process $w\equiv(W^1,\ldots,W^d)$.
In light of the Universal Limit Theorem and 
with stepsize adaptivity in mind in future
(see Gaines and Lyons~\cite{GL}), 
we for instance use in our examples order~$1$ stochastic 
numerical methods---that include
the L\'evy chordal area---to solve for the flow
on the Lie algebra $\g$.

We have thus explained the idea behind Munthe-Kaas methods
and how they can be generalized to the stochastic setting. 
The first half of this paper formalizes this procedure.

In the second half of this paper, we 
consider autonomous vector fields and 
construct stochastic Lie group integration schemes 
using Castell--Gaines methods. This approach
proceeds as follows. We truncate the 
stochastic exponential Lie series expansion
corresponding to the flow $\varphi_t$ of 
the solution process $y$ to the 
stochastic differential equation~\eqref{sde}.
We then approximate the driving path process 
$w\equiv(W^1,\ldots,W^d)$ by replacing it by
a suitable nearby piecewise smooth path
in the appropriate variation topology.
An approximation to
the solution $y_t$ requires the exponentiation
of the approximate truncated exponential Lie series.
This can be achieved by solving the system of 
ordinary differential equations driven by the
vector field that is the  
approximate truncated exponential Lie series.
If we use ordinary Munthe-Kaas methods 
as the underlying ordinary differential integrator
the Castell--Gaines method becomes a stochastic
Lie group integrator. 

Further, based on the Castell--Gaines approach we then present 
\emph{uniformly accurate exponential Lie series integrators} 
that are globally more accurate
than their stochastic Taylor counterpart schemes
(these are investigated in detail in 
Lord, Malham and Wiese~\cite{LMW} 
for linear stochastic differential equations). They
require the assumption that a sufficiently accurate
underlying ordinary differential integrator is used;
that integrator could for example be an ordinary
Lie group Munthe-Kaas method.
In the case of two driving Wiener processes we derive  
the order~$1/2$, and in 
the case of one driving Wiener process the order~$1$
uniformly accurate exponential Lie series integrators.
As a consequence we confirm the asymptotic efficiency properties 
for both these schemes
proved by Castell and Gaines~\cite{CG} (see Newton~\cite{N}
for more details on the concept of asymptotic efficiency). 
We also present in the case of one driving Wiener process 
a new order~$3/2$ uniformly accurate exponential Lie series 
integrator (also see Lord, Malham and Wiese~\cite{LMW}).

We present two physical applications that demonstrate
the advantage of using stochastic Munthe-Kaas methods.
First we consider a free rigid body which for example
could model the dynamics of a satellite. We suppose 
that it is perturbed by two independent multiplicative 
stochastic noise processes. The governing vector fields
are non-commutative and the corresponding exact stochastic
flow evolves on the unit sphere. 
We show that the stochastic Munthe-Kaas method,
with an order~$1$ stochastic Taylor integrator used to 
progress along the corresponding 
Lie algebra, preserves the approximate solution in the 
unit sphere manifold to within machine error. However
when an order~$1$ stochastic Taylor integrator is used
directly, the solution leaves the unit sphere. 
The contrast between these two methods is more emphatically 
demonstrated in our second application. Here we consider
an autonomous underwater vehicle that is also
perturbed by two independent multiplicative 
stochastic noise processes. The exact stochastic flow
evolves on the manifold which is the dual of the 
Euclidean Lie algebra $\mathfrak{se}(3)$; 
two independent Casimirs are
conserved by the exact flow. Again the stochastic
Munthe-Kass method preserves the Casimirs to within
machine error. However the order~$1$ stochastic Taylor 
integrator is not only unstable for
large stepsizes, but the approximation drifts off the manifold
and makes a dramatic excursion off to infinity in the
embedding space $\R^6$.

Preserving the approximate flow on the manifold of the
exact dynamics may be a required property 
for physical or financial systems driven by smooth
or rough paths---for general references see 
Iserles, Munthe-Kaas, N\o rsett and Zanna~\cite{IMNZ},
Hairer, Lubich and Wanner~\cite{HLW}, Elworthy~\cite{E},
Lyons and Qian~\cite{LQ}
and Milstein and Tretyakov~\cite{MT}. Stochastic Lie group 
integrators in the form of Magnus integrators for linear
stochastic differential equations were investigated  
by Burrage and Burrage~\cite{BB}. They were also
used in the guise of M\"obius schemes (see Schiff and Shnider~\cite{SS}) 
to solve stochastic Riccati equations by 
Lord, Malham and Wiese~\cite{LMW} where they outperformed
direct stochastic Taylor methods.
Further applications where they might be applied include: 
backward stochastic Riccati equations arising in
optimal stochastic linear-quadratic control 
(Kohlmann and Tang~\cite{KT}); jump diffusion 
processes on matrix Lie groups for Bayesian inference 
(Srivastava, Miller and Grenander~\cite{SMG}); 
fractional Brownian motions on Lie groups (Baudoin and Coutin~\cite{BC}) and
stochastic dynamics triggered by DNA damage (Chickarmane,
Ray, Sauro and Nadim~\cite{CRSN}).

Our paper is outlined as follows. In Section~2 we 
present the basic geometric setup, \emph{sans} stochasticity.
In particular we present a generalized right translation vector field 
on a Lie group that forms the basis of our subsequent
transformation from the Lie group to the manifold.
Using a Lie group action, this vector field pushes forward 
to an infinitesimal Lie group action vector field 
that generates a flow on the smooth manifold.
In Section~3 we specialize to the case of a matrix Lie group
and using the exponential map, derive the pullback
of the generalized right translation vector field 
on the Lie group to the corresponding vector field
on the Lie algebra.
To help give some context to our overall scheme, we 
provide in Section~4 illustrative examples
of manifolds and natural choices for associated 
Lie groups and actions that generate flows on those
manifolds. Then in Section~5 we show how a flow 
on a smooth manifold corresponding to a 
stochastic differential equation can be generated 
by a stochastic flow on a Lie algebra 
via a Lie algebra action.
We explicitly present stochastic Munthe-Kaas
Lie group integration methods in Section~6. 
We start the second half of our paper by reviewing the exponential
Lie series for stochastic differential equations in Section~7. We 
show in Section~8 how to construct geometric stochastic Castell--Gaines 
numerical methods. In particular we also present 
uniformly accurate exponential Lie series numerical schemes 
that not only can be used as geometric stochastic integrators,
but also are always more accurate than stochastic Taylor 
numerical schemes of the corresponding order.
In Section~9 we present our concrete numerical examples.
Finally in Section~10 we conclude and present some 
further future applications and directions.

\section{Lie group actions}\label{Liegroupactions}
Suppose $\M$ is a smooth finite $n$-dimensional 
submanifold of $\R^N$ with $n\leq N$. We use $\X(\M)$ to denote 
the Lie algebra of vector fields on the manifold $\M$,
equipped with the Lie--Jacobi bracket
$[U,V]\equiv U\cdot\nabla V-V\cdot\nabla U$, for
all $U,V\in\X(\M)$.
Let $\G$ denote a finite dimensional Lie group.

\begin{definition}[Lie group action]
A left Lie group \emph{action} of a Lie group $\mathcal G$ on a manifold $\mathcal M$
is a smooth map 
$\Lambda\colon \mathcal G\times\mathcal M\rightarrow\mathcal M$ satisfying
for all $y\in\mathcal M$ and $R,S\in\mathcal G$: 
(1) $\Lambda(\I,y)=y$;
(2) $\Lambda(R,\Lambda(S,y))=\Lambda(RS,y)$.
We denote $\Lambda_y\circ S\equiv\Lambda(S,y)$.
\end{definition}

Hereafter we suppose $y_0\in\M$ is fixed and 
focus on the action map $\Lambda_{y_0}\colon\G\ra\M$.
We assume that the Lie group action $\Lambda$ 
is \emph{transitive}, i.e.\ transport across the manifold from
any point $y_0\in\M$ to any other point $y\in\M$ can always
be achieved via a group element $S\in\G$ with
$y=\Lambda_{y_0}\circ S$ (Marsden and Ratiu~\cite{MR}, p.~310).

We define the Lie algebra $\g$ associated with the Lie group $\G$
to be the \emph{vector space of all right invariant vector fields on} $\G$.
By standard construction this is isomorphic to the tangent space to
$\G$ at the identity $\I\equiv\I_{\G}$ (see Olver~\cite{O2}, p.~48 
or Marsden and Ratiu~\cite{MR}, p.~269).

\begin{definition}[Generalized right translation vector field]
Suppose we are given a smooth map $\xi\colon\M\ra\g$.
With each such map $\xi$ we associate
a vector field $X_\xi\colon\G\rightarrow\X(\G)$ defined 
as follows
\begin{equation*}
X_\xi\circ S\equiv\left.\partial_\tau
\exp\bigl(\tau\,\xi(\Lambda_{y_0}\circ S)\bigr)\,S\right|_{\tau=0}
\end{equation*}
for $S\in\G$, where `$\exp$' is the usual local diffeomorphism 
$\exp\colon\g\rightarrow\G$ from a neighbourhood 
of the zero element $\oo\in\g$ to
a neighbourhood of $\I\in\G$.
\end{definition}

\begin{definition}[Infinitesimal Lie group action]
We associate with each vector field $X_\xi\colon\G\ra\X(\G)$
a vector field $\lambda_\xi\colon\M\ra\X(\M)$
as the push forward of $X_\xi$ from $\G$ to $\M$
by $\Lambda_{y_0}$, i.e.
$\lambda_\xi\equiv\bigl(\Lambda_{y_0}\bigr)_\ast X_\xi$,
so that if $S\in\G$ and $y=\Lambda_{y_0}\circ S\in\M$, then
\begin{equation*}
\lambda_\xi\circ y\equiv\left.\partial_\tau 
\Lambda_{y_0}\circ\gamma(\tau)\right|_{\tau=0}\,,
\end{equation*}
where $\gamma(t)\in\G$, $\gamma(0)=S$ and 
$\partial_\tau\gamma(\tau)=X_\xi\circ\gamma(\tau)$
(the flow generated on $\G$ by the vector field $X_\xi$ starting
at $S\in\G$). Naturally, as a vector field $\lambda_\xi$ is linear, 
and also
\begin{equation*}
\lambda_\xi\circ y\equiv \mathcal L_{X_\xi}\circ\Lambda_{y_0}\circ S\,,
\end{equation*}
the Lie derivative of $\Lambda_{y_0}$ along $X_\xi$ at $S\in\G$.
\end{definition}

\emph{Remarks.}
\begin{remunerate}
\item The map $\Lambda(S)\colon\M\ra\M$ defined
by $y\mapsto\Lambda(S)\circ y\equiv\Lambda_y\circ S$ represents 
a flow on $\M$. Hence if $y=\Lambda(S)\circ y_0$, the push forward
of $\lambda_\xi$ by $\Lambda(S)$ is given by  
$\bigl(\Lambda(S)\bigr)_\ast\lambda_\xi\equiv\lambda_{\Ad_S\xi}$
(Marsden and Ratiu~\cite{MR}, p.~317).
\item We define the \emph{isotropy subgroup} at $y_0\in\M$
by $\G_{y_0}\equiv\{S\in\G\colon\Lambda_{y_0}\circ S=y_0\}$;
it is a closed subgroup of $\G$ (see Helgason~\cite{Helgason}, p.~121
or Warner~\cite{W}, p.~123). We define the \emph{global isotropy subgroup}
by $\G_{\M}\equiv\cap_{y_0\in\M}\G_{y_0}\equiv\{S\in\G\colon\Lambda_{y_0}\circ
S=y_0,~\forall y_0\in\M\}$; it is a normal subgroup of $\G$
(see Olver~\cite{O2}, p.~38).
\item A Lie group action is said to be is \emph{effective/faithful} 
if the map $S\mapsto\Lambda(S)$ from $\G$ to $\text{Diff}(\M)$,
the group of diffeomorphisms on $\M$, is one-to-one. This is equivalent
to the condition that different group elements have different
actions, i.e.\ $\G_{\M}\equiv\{\I_\G\}$. A Lie group action
is said to be \emph{free} if $\G_{y_0}=\{\I_\G\}$ for all $y_0\in\M$,
i.e.\ $\Lambda_{y_0}$ is a diffeomorphism from $\G$ to $\M$.
For more details see Marsden and Ratiu~\cite{MR}, p.~310 and
Olver~\cite{O2}, p.~38.
\item The map $\gamma\colon\G/\G_{y_0}\ra\M$ defined by $\gamma\colon
S\cdot\G_{y_0}\mapsto\Lambda_{y_0}\circ S$ is a diffeomorphism,
i.e.\ $\M\cong\G/\G_{y_0}$ for any $y_0\in\M$
(a manifold $\M$ with a Lie group action $\Lambda\colon\G\times\M\ra\M$
defined over it is thus diffeomorphic to a \emph{homogeneous manifold};
see Warner~\cite{W}, p.~123 or Olver~\cite{O2}, p.~40).
Further, the induced action of $\G/\G_{\M}$ on $\M$ is effective.
Hence if $\Lambda$ is not an effective action of $\G$, 
we can replace it (without loss of generality) 
by the induced action of $\G/\G_{\M}$ (see Olver~\cite{O2}, p.~38).
\item Our definition for the 
generalized right translation vector field $X_\xi$
on $\G$ is motivated by the standard right translation vector field used to
identify $\g$, the vector space of right invariant vector fields on $\G$, 
with $T_{\I}\G$, the tangent space to $\G$ at the identity. When 
$\xi\in\g$ is constant, $X_\xi\in\X(\G)$ is right invariant and 
a Lie bracket on $T_\I\G$ can be defined via right extension by
the corresponding Lie--Jacobi bracket for the vector fields $X_\xi$
on $\X(\G)$. Unless $\xi\in\g$ is constant, $X_\xi$ is not in 
general right invariant. For further details see Varadarajan~\cite{V}, 
Olver~\cite{O2}, or Marsden and Ratiu~\cite{MR}.
\item The infinitesimal generator map $\xi\mapsto\lambda_\xi$ 
from $\g$ to $\X(\M)$ is a Lie algebra homomorphism. 
If we identify $\g$ as the vector space of left invariant
vector fields on $\G$ this map becomes an anti-homomorphism.
The Lie--Jacobi bracket as defined above gives the right (rather than left)
Lie algebra stucture over the group of diffeomorphisms on $\M$. 
If in addition we take the Lie--Jacobi bracket to be minus that 
defined above---associated with the left Lie algebra structure---then
the infinitesimal generator map becomes a homomorphism again. See for
example Marsden and Ratiu~\cite{MR}, p.~324 or Munthe-Kaas~\cite{MK}.
\item The image of $\g$ under the infinitesimal generator map
$\xi\mapsto\lambda_\xi$ forms a finite dimensional Lie algebra
of vector fields on $\M$ which is isomorphic to the Lie algebra
of the effectively acting quotient group $\G/\G_{\M}$ 
(see Olver~\cite{O2}, p.~56). 
Thus the tangent space to $\M$ at any point is $\g$ and
$\M$ inherents a connection from $\G/\G_\M$.
Connections are necessary to define
martingales on manifolds, but not for
defining semimartingales (our focus here);
see Malliavin~\cite{Malliavin} and Emery~\cite{Emery}.
\item A comprehensive study of the systematic construction
of symmetry Lie groups from given vector fields can be found in Olver~\cite{O2}. 
\item We assumed above that the 
vector fields $X_\xi$ and $\lambda_\xi$ are autonomous.
However all results in this and subsequent sections up
to Section~\ref{els} can be straightforwardly extended
to non-autonomous vector fields generated by 
$\xi\colon\M\times\R\ra\g$ with 
$(y,t)\mapsto\xi(y,t)$ for all $y\in\M$ and $t\in\R$.
\item For full generality we want to suspend reference to embedding
spaces as far as possible. However in subsequent sections to be concise
we will more explicitly reclaim this context.
\end{remunerate}

\section{Pull back to the Lie algebra}
For ease of presentation, we will assume in this 
section that $\G$ is a matrix Lie group.
Recall that the exponential map 
$\exp\colon\g\rightarrow\G$ is a local diffeomorphism
from a neighbourhood of $\oo\in\g$ to a neighbourhood of 
$\I\in\G$.
Let $v_\xi\colon\g\ra\g$ 
be the pull back of the vector field 
$X_{\xi}\colon\G\ra\X(\G)$
from $\G$ to $\g$ via the exponential mapping 
$\exp\colon\g\ra\G$, i.e.\ 
$v_\xi\circ\sigma\equiv\exp^\ast X_{\xi}\circ\sigma$.
If $\sigma\in\g$ then  
\begin{equation}
v_\xi\circ\sigma=\mathrm{dexp}_\sigma^{-1}\circ
\xi\bigl(\Lambda_{y_0}\circ\exp\sigma\bigr)\,.
\label{pullbacktog}
\end{equation}
Here $\mathrm{dexp}^{-1}_\sigma\colon\g\ra\g$ is the inverse
of the right-trivialized tangent map of the exponential
$\mathrm{dexp}_\sigma\colon\g\ra\g$ defined as follows.
If $\beta(\tau)$ is a curve in $\g$ such that $\beta(0)=\sigma$
and $\beta'(0)=\eta\in\g$ then 
$\mathrm{dexp}\colon\g\times\g\ra\g$ 
is the local smooth map (Varadarajan~\cite{V}, p.~108)
\begin{align*}
\mathrm{dexp}_\sigma\circ\eta
&\equiv\left.\partial_\tau\exp\beta(\tau)\right|_{\tau=0}\,\exp(-\sigma)\\
&=\left(\frac{\exp(\mathrm{ad}_\sigma)-\I}{\mathrm{ad}_\sigma}\right)\circ\eta\,.
\end{align*}
Note that as a tangent map $\mathrm{dexp}_\sigma\colon\g\ra\g$ is linear.
The inverse operator $\mathrm{dexp}^{-1}_\sigma$ is 
the operator series~\eqref{vexplicit} 
generated by considering the reciprocal of $\mathrm{dexp}_\sigma$.

To show that~\eqref{pullbacktog} is true, if 
$\exp\colon\g\ra\G$ with $\sigma\mapsto S=\exp\sigma$, and 
$\beta(\tau)\in\g$ with $\beta(0)=\sigma$ and 
$\partial_\tau\beta(\tau)=v_\xi\circ\beta(\tau)$, then:
\begin{align*} 
\exp_\ast v_{\xi}\circ S
=&\;\left.\partial_\tau\exp\beta(\tau)\right|_{\tau=0}\\
=&\;\bigl(\mathrm{dexp}_\sigma \circ v_\xi\circ\sigma\bigr)\,\exp(\sigma)\\
\equiv&\;X_\xi\circ S\,.
\end{align*}
Since `$\exp$' is a diffeomorphism in a neighbourhood
of $\oo\in\g$, this push forward calculation 
establishes the pull back~\eqref{pullbacktog} for all 
$\sigma\in\g$ in that neighbourhood.

\section{Illustrative examples}
Suppose the vector field $V\colon\M\times\R\ra\X(\M)$
generates a flow solution $y_t\in\M$ starting from $y_0\in\M$. 
Then assume there exists a: 
\begin{enumerate}
\item Lie group $\mathcal G$ with corresponding Lie algebra $\g$;
\item Lie group action $\Lambda_{y_0}\colon\G\ra\M$ 
for which a starting point $y_0\in\M$ is fixed;
\item Vector field $\lambda_{\xi}\colon\M\times\R\ra\X(\M)$ such that:
$V\equiv\lambda_{\xi}$, i.e.\ $V$ is a fundamental vector field 
corresponding to the action $\Lambda_{y_0}$. 
\end{enumerate}

Let us suppose $\G$ is a matrix Lie group (or can be embedded into
a matrix Lie group, for example the Euclidean group $SE(3)$ is 
naturally embedded into the special linear group $SL(4;\R)$).
We have for all $S\in\G$ and $t\in\R$,
\begin{equation}\label{GVF}
X_\xi(S,t)\equiv\xi\bigl(\Lambda_{y_0}(S),t\bigr)\,S\,.
\end{equation}
If $V=\lambda_\xi$ for some $\xi\colon\M\ra\g$, some Lie group $\mathcal G$ 
and corresponding action $\Lambda_{y_0}$, then the flow generated by
$X_\xi$ on $\G$ drives the flow generated by $V$ on $\M$.
In each of the examples below, given the manifold $\M$, we 
present a natural Lie group and action associated
with the manifold structure, and identify vector fields 
which generate flows on the manifold via the Lie group.

\subsubsection*{Stiefel manifold $\mathbb V_{n,k}$} Suppose 
$\M=\mathbb V_{n,k}\equiv\{y\in\R^{n\times k}\colon y^{\text{T}}y=I\}$. 
Take $\G=SO(n)$, the special orthogonal group,
and $\Lambda_{y_0}(S)\equiv Sy_0$, the action of left multiplication.
The corresponding Lie algebra $\g=\mathfrak{so}(n)$. 
Then by direct calculation $\lambda_\xi(y)=\xi(y,t)\,y$.
Hence if the given vector field $V(y,t)=\xi(y,t)\,y$,
then the push forward of the flow generated by 
$X_\xi(S,t)$ on $\G$ in~\eqref{GVF} is the 
flow generated by $V$ on $\M$.
Note that the unit sphere $\mathbb S^2\cong\mathbb V_{3,1}$, i.e.\  
$\mathbb S^2$ is just a particular Stiefel manifold. In Section~9
as an application, we consider rigid body dynamics 
evolving on $\mathbb S^2$.

\subsubsection*{Isospectral manifold $\mathbb S_n$} Suppose 
$\M=\mathbb S_n=\{y\in\R^{n\times n}\colon y^{\text{T}}=y\}$,
the set of $n\times n$ real symmetric matrices.
Take $\G=O(n)$, the orthogonal group and 
$\Lambda_{y_0}(S)\equiv Sy_0S^{\text{T}}$, 
which is an isospectral action (Munthe-Kaas~\cite{MK}).
The corresponding Lie algebra is $\g=\mathfrak{so}(n)$.
Again, by direct calculation
$\lambda_\xi(y)=\xi(y,t)\,y-y\,\xi(y,t)$.
Hence if the given vector field 
$V(y,t)=\xi(y,t)\,y-y\,\xi(y,t)$,
then the push forward of the flow generated by 
$X_\xi(S,t)$ on $\G$ in~\eqref{GVF} is the 
flow generated by $V$ on $\M$.

\subsubsection*{Dual of the Euclidean algebra $\mathfrak{se}(3)^\ast$}
Suppose $\M=\mathfrak{se}(3)^\ast\cong\R^3$, 
the dual of the Euclidean algebra $\mathfrak{se}(3)$ 
of the Euclidean group 
$SE(3)=\bigl\{(s,\rho)\in SE(3)\colon s\in SO(3),~\rho\in\R^3\bigr\}$.
Take $\G=SE(3)$ so $\g=\mathfrak{se}(3)$ 
and $\Lambda\equiv\mathrm{Ad}^\ast\colon\G\times\g^\ast\ra\g^\ast$,
the coadjoint action of $\G$ on $\g^\ast$.
Then by direct calculation 
$\lambda_\xi(y)=-\mathrm{ad}_{\xi}^\ast(y)$.
Since $\lambda_\xi(y)$ in linear in $\xi$ and
$-\lambda_\xi(y)\equiv\lambda_{-\xi}(y)$,
it follows that if $V(y)=\mathrm{ad}_{\xi}^\ast(y)$,  
then the push forward of the flow generated by 
$X_{-\xi}(S,t)=-\xi\bigl(\Lambda_{y_0}(S),t\bigr)\,S$ on $\G$ is the 
flow generated by $V$ on $\M$. For more details see
Section~9 where we investigate the dynamics of an
autonomous underwater vehicle 
evolving on $\mathfrak{se}(3)^\ast$.


\subsubsection*{Grassmannian manifold $\mathrm{Gr}(k,n)$}
The Grassmannian manifold $\mathcal M=\text{Gr}(k,n)$ is
the space of $k$-dimensional subspaces of $\mathbb R^n$.
Take $\mathcal G=\text{GL}(n)$, the general linear matrix group,
where if $S\in\text{GL}(n)$, we identify
\begin{equation*}
S=\begin{pmatrix}\alpha & \beta\\\gamma &\delta \end{pmatrix}\,,
\end{equation*}
where the block matrices $\alpha$, $ \beta$, $\gamma$ 
and $\delta$ are sizes $k\times k$, $k\times(n-k)$, 
$(n-k)\times k$ and $(n-k)\times(n-k)$, respectively
(see Schiff and Shnider~\cite{SS}; Munthe-Kaas~\cite{MK}).
We choose the action of $\text{GL}(n)$ on $\text{Gr}(k,n)$ 
to be the generalized M\"obius transformation 
$\Lambda_{y_0}(S)=(\alpha y_0+\beta)(\gamma y_0+\delta)^{-1}$.
Hence if
\begin{equation*}
\xi(t)=\begin{pmatrix}a(t)&b(t)\\c(t)&d(t)\end{pmatrix}\,,
\end{equation*}
then direct calculation reveals that
$\lambda_\xi(y)=a(t)y+b(t)-yc(t)y-yd(t)$.
Hence if the given vector field $V(y)=a(t)y+b(t)-yc(t)y-yd(t)$,
then the push forward of the flow generated by 
$X_\xi(S,t)=\xi(t)\,S$ on $\G$ is the 
flow generated by $V$ on $\text{Gr}(k,n)$.

\section{Stochastic Lie group integration}\label{cgi}
We show that if a Lie group action $\Lambda\colon\G\times\M\ra\M$
exists, then for $y_0\in\M$ fixed, the Lie algebra action
$\Lambda_{y_0}\circ\exp\colon\g\ra\M$ carries a flow on $\g$ to 
a flow on $\M$.

\begin{theorem}\label{pullbacklemma}
Suppose there exists a Lie group action
$\Lambda\colon\G\times\M\ra\M$. Then if 
there exists a process 
$\sigma\in\g$ and a stopping time $T_*$ 
such that on $[0,T_*)$,
$\sigma$ satisfies the Stratonovich
stochastic differential equation
\begin{equation}\label{sigmaeqn}
\sigma_t=\sum_{i=0}^d\int_0^t
v_{\xi_i}\circ\sigma_\tau\,\mathrm{d}W_\tau^i\,,
\end{equation}
then the process $y=\Lambda_{y_0}\circ\exp\sigma\in\M$ 
satisfies the Stratonovich 
stochastic differential equation on $[0,T_*)$:
\begin{equation}\label{Msde}
y_t=y_0+\sum_{i=0}^d\int_0^t\lambda_{\xi_i}\circ y_\tau\,\mathrm{d}W_\tau^i\,.
\end{equation}
\end{theorem}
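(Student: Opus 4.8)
The plan is to verify that $y=\Lambda_{y_0}\circ\exp\sigma$ satisfies~\eqref{Msde} by differentiating the composition $\Lambda_{y_0}\circ\exp$ along the Stratonovich flow of $\sigma$. Because the Stratonovich calculus obeys the ordinary chain rule, this reduces to a purely geometric computation involving the maps already set up in Sections~2 and~3. First I would set $S_t\equiv\exp\sigma_t\in\G$ and apply the chain rule to~\eqref{sigmaeqn}: since $\sigma$ solves $\mathrm d\sigma_t=\sum_i v_{\xi_i}\circ\sigma_t\,\mathrm dW_t^i$ in the Stratonovich sense, we obtain $\mathrm dS_t=\sum_i(\mathrm{dexp}_{\sigma_t}\circ v_{\xi_i}\circ\sigma_t)\,S_t\,\mathrm dW_t^i$. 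By the push-forward identity established just after~\eqref{pullbacktog}, namely $\exp_\ast v_{\xi_i}\circ S=(\mathrm{dexp}_\sigma\circ v_{\xi_i}\circ\sigma)\,\exp(\sigma)=X_{\xi_i}\circ S$, the coefficient of $\mathrm dW^i$ is exactly $X_{\xi_i}\circ S_t$. Hence $S_t$ solves the Stratonovich equation $\mathrm dS_t=\sum_i X_{\xi_i}\circ S_t\,\mathrm dW_t^i$ on $[0,T_*)$.

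Next I would push the flow $S_t$ forward to $\M$ via $\Lambda_{y_0}$. Writing $y_t=\Lambda_{y_0}\circ S_t$ and again invoking the Stratonovich chain rule, $\mathrm dy_t=\sum_i\bigl((\Lambda_{y_0})_\ast X_{\xi_i}\bigr)\circ y_t\,\mathrm dW_t^i$. But $(\Lambda_{y_0})_\ast X_{\xi_i}=\lambda_{\xi_i}$ is precisely the definition of the infinitesimal Lie group action (Definition, Section~2). Therefore $\mathrm dy_t=\sum_i\lambda_{\xi_i}\circ y_t\,\mathrm dW_t^i$, and since $\sigma_0=\oo$ gives $S_0=\I$ and $y_0=\Lambda_{y_0}\circ\I=y_0$, integrating yields~\eqref{Msde}. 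The stopping time $T_*$ enters because $\exp$ is only a local diffeomorphism near $\oo\in\g$, so the argument is valid only while $\sigma_t$ remains in the neighbourhood on which $\exp$ and $\mathrm{dexp}_\sigma^{-1}$ are well-defined; one takes $T_*$ to be the first exit time from that neighbourhood (or earlier).

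The main obstacle — and the only real content beyond unwinding definitions — is justifying the use of the ordinary chain rule for the composition of a smooth (but possibly only locally defined) map with a Stratonovich semimartingale, and checking that the composed processes are genuine semimartingales up to $T_*$. This requires the Stratonovich change-of-variables formula for maps between manifolds (as in Malliavin~\cite{Malliavin} or Emery~\cite{Emery}): one should confirm that $\exp$ and $\Lambda_{y_0}$ are $C^2$ (indeed smooth) on the relevant domains, that the vector fields $v_{\xi_i}$, $X_{\xi_i}$, $\lambda_{\xi_i}$ are smooth so that the SDEs have local solutions, and that the intermediate process $S_t$ does not leave the chart before $T_*$. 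A secondary point worth a line is that $\mathrm{dexp}_{\sigma}$ appearing in the chain-rule computation is exactly the right-trivialized tangent map from Section~3, so that the cancellation $\mathrm{dexp}_{\sigma}\circ\mathrm{dexp}_{\sigma}^{-1}=\I$ built into the definition~\eqref{vexplicit} of $v_{\xi_i}$ is what produces the clean coefficient $X_{\xi_i}\circ S_t$; everything else is bookkeeping with the two push-forwards composed in the commutative diagram of Section~1.
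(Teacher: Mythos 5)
Your proposal is correct and takes essentially the same approach as the paper: the paper applies the Stratonovich chain rule (It\^o's lemma in Stratonovich form) once to the composite map $\Lambda_{y_0}\circ\exp$ and identifies the resulting Lie derivative $\mathcal{L}_{v_{\xi_i}}\circ\Lambda_{y_0}\circ\exp\sigma_t$ with $\lambda_{\xi_i}\circ y_t$, which is precisely the composition of the two push-forward steps you carry out via the intermediate process $S_t=\exp\sigma_t$ on the Lie group. Your remarks on the local nature of $\exp$ and the role of the stopping time $T_*$ are consistent with the theorem's hypotheses and add no gap.
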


\begin{proof}
Using It\^o's lemma, if $\sigma_t\in\g$ satisfies~\eqref{sigmaeqn}
then $\Lambda_{y_0}\circ\exp\sigma_t$ satisfies
\begin{equation*}
\Lambda_{y_0}\circ\exp\sigma_t=\Lambda_{y_0}\circ\exp\oo
+\sum_{i=0}^d\int_0^t\mathcal L_{v_{\xi_i}}\circ
\Lambda_{y_0}\circ\exp\sigma_\tau
\,\mathrm{d}W^i_\tau\,.
\end{equation*}
Now recall that for each $i=0,1,\ldots,d$, 
$X_{\xi_i}$ is the push forward of
$v_{\xi_i}$ from $\g$ to $\G$ via the exponential map,
and that $\lambda_{\xi_i}$ is the push forward of 
$X_{\xi_i}$ from $\G$ to $\M$ via $\Lambda_{y_0}$ and
so the Lie derivative
\begin{equation*}
\mathcal L_{v_{\xi_i}}\circ\Lambda_{y_0}\circ\exp\sigma_t
\equiv\lambda_{\xi_i}\circ y_t\,.
\end{equation*}
Then since $y_t=\Lambda_{y_0}\circ\exp\sigma_t$, we conclude 
that $y\in\M$ is a process satisfying the 
stochastic differential equation~\eqref{Msde}.\qquad\end{proof}

\begin{corollary}\label{cor}
Suppose that for each $i=0,1,\ldots,d$  
there exists $\xi_i\colon\M\ra\g$
such that the vector field $V_i\colon\M\ra\X(\M)$ and 
$\lambda_{\xi_i}\colon\M\ra\X(\M)$ can be identified, 
i.e.\
\begin{equation}
V_i\equiv\lambda_{\xi_i}\,.
\label{suitableform}
\end{equation}
Then the push forward by `$\Lambda_{y_0}\circ\exp$' of the flow on  
the Lie algebra manifold $\g$ generated by the  
stochastic differential equation~\eqref{sigmaeqn} is
the flow on the smooth manifold $\M$ generated by
the stochastic differential equation~\eqref{Msde},
whose solution can be expressed in the form 
$y_t=\Lambda_{y_0}\circ\exp\sigma_t$.
\end{corollary}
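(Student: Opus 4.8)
The plan is to obtain the corollary as an essentially immediate consequence of Theorem~\ref{pullbacklemma} once the hypothesis \eqref{suitableform} is in force. First I would invoke the theorem to produce, for a given realization of the driving path $w\equiv(W^1,\ldots,W^d)$, a process $\sigma\in\g$ solving \eqref{sigmaeqn} on some maximal interval $[0,T_*)$; here one should note that the vector fields $v_{\xi_i}$ on $\g$ are smooth (they are given explicitly by the convergent series \eqref{vexplicit} composed with the smooth map $\xi_i\circ\Lambda_{y_0}\circ\exp$), so local existence and uniqueness of $\sigma$ in the Lie algebra is standard and the stopping time $T_*$ is well defined. Theorem~\ref{pullbacklemma} then tells us that $y\equiv\Lambda_{y_0}\circ\exp\sigma$ satisfies \eqref{Msde} on $[0,T_*)$.

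The second and only substantive step is to rewrite \eqref{Msde} using the identification \eqref{suitableform}: since $V_i\equiv\lambda_{\xi_i}$ for each $i=0,1,\ldots,d$, equation \eqref{Msde} is literally equation \eqref{sde} with $y$ in place of the abstract solution, so $y_t=\Lambda_{y_0}\circ\exp\sigma_t$ is a solution of the original stochastic differential equation \eqref{sde}. This is precisely the assertion that the push forward by $\Lambda_{y_0}\circ\exp$ of the $\g$-flow is the $\M$-flow generated by \eqref{Msde}, and that its solution admits the stated closed form.

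I expect the main obstacle to be not the algebra but the bookkeeping of domains: the exponential map is only a \emph{local} diffeomorphism near $\oo\in\g$, and $\mathrm{dexp}_\sigma^{-1}$ is only defined while $\mathrm{ad}_\sigma$ avoids the relevant spectral obstructions, so one must be careful that the stopping time $T_*$ in Theorem~\ref{pullbacklemma} already encodes "$\sigma$ stays in the region where $\exp$ and $\mathrm{dexp}_\sigma^{-1}$ are valid," and that the chain-rule (It\^o's lemma) computation inherited from the theorem is only claimed on $[0,T_*)$. Since the theorem statement has already packaged all of this, the corollary's proof can simply cite it and then substitute \eqref{suitableform}; I would explicitly remark that the push-forward statement is the composition of the two push forwards $(\exp)_\ast$ and $(\Lambda_{y_0})_\ast$ already recorded in the commutative diagram of the introduction, so that no further computation is needed. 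The only thing I would be slightly cautious about is whether uniqueness of the $\M$-valued solution is being claimed: the corollary as stated asserts only that $y_t=\Lambda_{y_0}\circ\exp\sigma_t$ \emph{is} a solution, which follows directly; if one wanted "the" solution one would additionally invoke pathwise uniqueness for \eqref{sde} under the smoothness hypotheses on the $V_i$, but that lies outside what the corollary actually demands.
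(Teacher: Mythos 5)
Your proposal is correct and matches the paper's (implicit) argument: the corollary is stated without a separate proof precisely because it follows immediately from Theorem~\ref{pullbacklemma} once the identification $V_i\equiv\lambda_{\xi_i}$ in \eqref{suitableform} turns equation \eqref{Msde} into the original equation \eqref{sde}. Your additional remarks on the local validity of $\exp$ and $\mathrm{dexp}_\sigma^{-1}$ (encoded in the stopping time $T_*$) and on not claiming uniqueness are sensible but not needed beyond what the theorem already packages.
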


\emph{Remark.} If the action is free then `$\Lambda_{y_0}\circ\exp$'
is a diffeomorphism from a neighbourhood of
$\oo\in\g$ to a neighbourhood of $y_0\in\M$.

\section{Stochastic Munthe-Kaas methods}
Assuming that the vector fields in our original stochastic
differential equation~\eqref{sde} are fundamental and 
satisfy \eqref{suitableform}, then 
stochastic Munthe-Kaas methods are constructed as follows:
\begin{enumerate}
\item Subdivide the global interval of integration
$[0,T]$ into subintervals $[t_n,t_{n+1}]$.
\item Starting with $t_0=0$, repeat the next two steps 
over successive intervals $[t_n,t_{n+1}]$ until $t_{n+1}=T$.
\item Compute an approximate solution $\hat\sigma_{t_n,t_{n+1}}$ 
to~\eqref{sigmaeqn} across $[t_n,t_{n+1}]$ 
using a stochastic Taylor, stochastic Runge--Kutta or Castell--Gaines method.
\item Compute the approximate solution 
$y_{t_{n+1}}\approx\Lambda_{y_{t_n}}\circ\exp\hat\sigma_{t_n,t_{n+1}}$.
\end{enumerate}

Note that by construction $\hat\sigma_{t_n,t_{n+1}}\in\g$ 
because the stochastic differential equation~\eqref{sigmaeqn}
(or any stochastic Taylor or other sensible approximation)
evolves the solution locally on the Lie algebra~$\mathfrak g$
via the vector fields $v_{\xi_i}\colon\g\ra\g$. 
Suitable methods for approximating the exponential map 
to ensure it maps $\g$ to $\G$ appropriately can be found in 
Iserles and Zanna~\cite{IZ}. Then by 
construction $y_{t_{n+1}}\in\M$.

For example, with two Wiener processes and 
autonomous vector fields $v_{\xi_i}\circ\sigma$,
an order~$1$ stochastic Taylor Munthe-Kaas method is based on 
\begin{equation}\label{STMK}
\hat\sigma_{t_n,t_{n+1}}=\bigl(J_0v_{\xi_0}+J_1v_{\xi_1}+J_2v_{\xi_2}
+\tfrac12J_1^2v_{\xi_1}^2+J_{12}v_{\xi_1}v_{\xi_2}
+J_{21}v_{\xi_2}v_{\xi_1}+\tfrac12J_2^2v_{\xi_2}^2\bigr)\circ\oo\,,
\end{equation} 
evaluated at the zero element $\oo\in\g$.
Typically `$\mathrm{dexp}_\sigma^{-1}$' is truncated 
to only include the necessary low order terms to
maintain the order of the numerical scheme.

\emph{Remark.} 
It is natural to invoke Ado's Theorem 
(see for example Olver~\cite{O2} p.~54): any finite dimensional
Lie algebra is isomorphic to a Lie subalgebra of $\mathfrak{gl}(n)$
(the general linear algebra) for some $n\in\mathbb N$. However as
Munthe-Kaas~\cite{MK} points out, directly using a matrix representation
for the given Lie group might not lead to the optimal computational
implementation (other data structures might do so).

\section{Exponential Lie series}\label{els}
The stochastic Taylor series is known in different contexts 
as the \emph{Neumann series}, \emph{Peano--Baker series} or
\emph{Feynman--Dyson path ordered exponential}. 
If the vector fields in the stochastic differential equation~\eqref{sde}
are autonomous (which we assume henceforth), i.e.\  for all $i=0,1,\ldots,d$, 
$V_i=V_i(y)$ only, then the stochastic Taylor series for the flow is
\begin{equation*}
\varphi_t=
\sum_{m=0}^\infty\,\sum_{\alpha\in\mathbb P_m}
J_{\alpha_1\cdots \alpha_m}(t)\,V_{\alpha_1}\cdots V_{\alpha_m}\,.
\end{equation*}
Here $\mathbb P_m$ is the set of all combinations
of multi-indices $\alpha=(\alpha_1,\ldots,\alpha_m)$ of length $m$
with $\alpha_i\in\{0,1,\ldots,d\}$ and 
\begin{equation*}
J_{\alpha_1\cdots \alpha_m}(t)\equiv\int_0^t\cdots\int_0^{\tau_{m-1}}
\mathrm{d}W^{\alpha_1}_{\tau_m}\,\cdots\,\mathrm{d}W^{\alpha_m}_{\tau_1}
\end{equation*}
are multiple Stratonovich integrals. 

The logarithm of $\varphi_t$ is the 
\emph{exponential Lie series},
\emph{Magnus expansion} (Magnus~\cite{Magnus}) 
or \emph{Chen--Strichartz formula} (Chen~\cite{Ch}, 
Strichartz~\cite{S}).
In other words we can express the flow map in the form 
$\varphi_t=\exp\psi_t$, where 
\begin{equation*}
\psi_t=\sum_{i=0}^d J_i(t)V_i+\sum_{j>i=0}^d
\tfrac12(J_{ij}-J_{ji})(t)[V_i,V_j]+\cdots
\end{equation*}
is the exponential Lie series for our system,
and $[\cdot\,,\cdot]$ is the Lie--Jacobi bracket on $\X(\mathcal M)$.
See Yamato~\cite{Y}, Kunita~\cite{Ku1980}, 
Ben Arous~\cite{BA} and Castell~\cite{C}
for the derivation and convergence of 
the exponential Lie series expansion in the stochastic context; 
Strichartz~\cite{S} for the full explicit expansion;
Sussmann~\cite{Su} for a related product expansion and 
Lyons~\cite{L} for extensions to rough paths. 

Let us denote the truncated exponential Lie series by
\begin{equation}\label{eq:orderm}
\hat\psi_t=\sum_{\alpha\in\mathbb Q_m}J_\alpha\, c_\alpha\,,
\end{equation}
where $\mathbb Q_m$ denotes the finite set of multi-indices $\alpha$
for which $\|J_\alpha\|_{L^2}$ is of order up to and including $t^m$, 
where $m=1/2,1,3/2,\ldots$. 
The terms $c_\alpha$ are linear combinations 
of finitely many (length $\alpha$)
products of the smooth vector fields $V_i$, $i=0,1,\ldots,d$. The following
asymptotic convergence result can be established along the lines
of the proof for linear stochastic differential equations 
in Lord, Malham and Wiese~\cite{LMW};
we provide a proof in Appendix~\ref{firstbigproof}.

\begin{theorem}\label{th:conv}
Assume the vector fields $V_i$ have 
$2m+1$ uniformly bounded derivatives,
for all $i=0,1,\ldots,d$.
Then for $t\leq1$, the flow $\exp\hat\psi_t\circ y_0$ 
is square-integrable, where 
$\hat\psi_t$ is the truncated Lie series~\eqref{eq:orderm}. 
Further, if $y$ is the solution 
of the stochastic differential equation~\eqref{sde},
there exists a constant $C\bigl(m,\|y_0\|_2\bigr)$ such that
\begin{equation}\label{eq:conv}
\bigl\|y_t-\exp\hat\psi_t\circ y_0\bigr\|_{L^2}
\leq C\bigl(m,\|y_0\|_2\bigr)\,t^{m+1/2}\,.
\end{equation}
\end{theorem}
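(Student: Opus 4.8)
The plan is to compare the stochastic Taylor expansion of the exact flow $\varphi_t$ with the stochastic Taylor expansion of the approximate flow $\exp\hat\psi_t$, and to show that their difference enters only at order $t^{m+1/2}$ in the $L^2$ sense. First I would recall that $\varphi_t=\exp\psi_t$ with $\psi_t$ the full exponential Lie series, so $\hat\psi_t$ is precisely $\psi_t$ truncated so that all retained multiple Stratonovich integrals $J_\alpha$ satisfy $\|J_\alpha\|_{L^2}=O(t^{\ell(\alpha)/2})$ up to $\ell=2m$ (with the usual convention that integration against $\mathrm dW^0=\mathrm d\tau$ counts double). The removed terms $\psi_t-\hat\psi_t$ are a Lie series in the $V_i$ whose leading term has $\|J_\alpha\|_{L^2}=O(t^{m+1/2})$. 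Because the $V_i$ have $2m+1$ bounded derivatives, each retained $c_\alpha$ and each $\mathrm{ad}$-type bracket appearing here maps bounded-derivative functions to bounded-derivative functions, so composing with $y_0$ and expanding $\exp$ is legitimate.

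Second, I would set up the estimate in two pieces: (i) $\|\varphi_t\circ y_0-\exp\psi_t\circ y_0\|_{L^2}=0$ (they are equal), and (ii) $\|\exp\psi_t\circ y_0-\exp\hat\psi_t\circ y_0\|_{L^2}\le C\,t^{m+1/2}$. For (ii) I would write $\exp\psi_t\circ y_0-\exp\hat\psi_t\circ y_0$ via the identity relating two exponentials of vector fields differing by $\delta_t:=\psi_t-\hat\psi_t$; equivalently, both flows solve ODEs in the "frozen-path" sense and one subtracts them with a variation-of-constants / Gronwall argument, bounding the inhomogeneity by $\|\delta_t\|$ times bounded-derivative norms of the retained fields. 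The square-integrability claim is handled simultaneously: $\exp\hat\psi_t\circ y_0$ is a finite sum of terms $J_\alpha\cdot(\text{polynomial in }y_0\text{ and derivatives of }V_i)$, and all the iterated integrals $J_\alpha$ have finite moments of every order (Gaussian-type tails), so for $t\le1$ the $L^2$ norm is finite; the bound $C$ depends on $m$ and $\|y_0\|_2$ through the bounded-derivative constants of the $V_i$ and the polynomial growth in $y_0$.

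Third, for the key moment bookkeeping I would invoke the standard $L^2$ scaling $\|J_{\alpha_1\cdots\alpha_k}\|_{L^2}\le c_k\,t^{(k+n_0(\alpha))/2}$ where $n_0(\alpha)$ counts the zero indices, together with the fact that products and Lie brackets of the $V_i$ acting on $y_0$ are controlled in $L^2$ by the uniform derivative bounds — exactly the mechanism used in Lord, Malham and Wiese \cite{LMW} for the linear case, which I would cite and adapt, deferring the detailed combinatorics to Appendix~\ref{firstbigproof}. The main obstacle, and where the real work lies, is controlling the \emph{remainder} rather than the leading removed term: one must show that the full tail $\psi_t-\hat\psi_t$, an infinite Lie series, contributes to $L^2$ at order exactly $t^{m+1/2}$ and no worse, which requires a convergence estimate for the exponential Lie series in the stochastic setting (following Ben Arous \cite{BA}, Castell \cite{C}) uniform enough to be summed against the bounded-derivative norms of the $V_i$; handling the nonlinearity of $\exp$ on vector fields (as opposed to the linear/matrix case) while keeping the constant depending only on $m$ and $\|y_0\|_2$ is the delicate point.
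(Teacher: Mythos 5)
Your decomposition routes everything through the full exponential Lie series: you take $\varphi_t=\exp\psi_t$ as an identity (step (i)) and then must bound the contribution of the infinite tail $\delta_t=\psi_t-\hat\psi_t$ in $L^2$ (step (ii)). This is where the proposal breaks down, and it is not a detail that can be deferred: under the hypothesis of Theorem~\ref{th:conv} the vector fields have only $2m+1$ uniformly bounded derivatives, so the terms of $\psi_t$ beyond the truncation level---which involve ever higher-order brackets, hence ever higher derivatives of the $V_i$---are not even controlled by the assumptions; moreover the representation $\varphi_t=\exp\psi_t$ is only a pathwise statement valid up to a random time on which the series converges (Ben Arous~\cite{BA}, Castell~\cite{C}), not a global $L^2$ identity, and those references give asymptotic remainder estimates for \emph{truncated} expansions rather than a summable $L^2$ bound on the whole tail. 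You correctly identify this as ``the delicate point,'' but it is precisely the obstruction that makes this route unworkable at the stated level of generality, rather than the residual bookkeeping of an otherwise complete argument. The paper avoids the full series entirely: it inserts the truncated stochastic Taylor solution $\hat y_t$ and writes $\|y_t-\exp\hat\psi_t\circ y_0\|_{L^2}\leq\|y_t-\hat y_t\|_{L^2}+\|\hat y_t-\exp\hat\psi_t\circ y_0\|_{L^2}$. The first term is the classical strong remainder estimate for the truncated stochastic Taylor scheme (Proposition 5.9.1 of Kloeden and Platen~\cite{KP}, with $y_t\in L^2$ from Gihman and Skorohod~\cite{GS}), which needs exactly the finitely-many-bounded-derivatives hypothesis; the second is a finite term-matching computation, since $\hat\psi_t$ is a finite sum and, by the definition of the Lie series as the logarithm of the Taylor series, the terms of $\exp\hat\psi_t\circ y_0$ up to order $t^m$ coincide with $\hat y_t$, leaving finitely many $\mathcal O(t^{m+1/2})$ terms. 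No infinite Lie series is ever summed.

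A secondary but genuine error: you assert that $\exp\hat\psi_t\circ y_0$ ``is a finite sum of terms $J_\alpha\cdot(\text{polynomial in }y_0\ldots)$'' and deduce square-integrability from finite moments of the individual $J_\alpha$. It is not a finite sum: $\hat\psi_t$ is a vector field and $\exp\hat\psi_t\circ y_0$ is its time-one flow, i.e.\ the infinite series $\sum_{k\geq0}\tfrac1{k!}(\hat\psi_t)^k\circ y_0$, whose $k$th term contains $k$-fold products of Stratonovich integrals. The paper's Appendix~\ref{firstbigproof} proves integrability by converting such products into sums of higher-order iterated integrals via the product rule, obtaining $\|(\hat\psi_t)^k\circ y_0\|_{L^2}\leq\bigl(\text{const}(m)\cdot t^{1/2}\bigr)^k$ for $t\leq1$, and then summing the resulting exponential series; some argument of this geometric-in-$k$ type (or a flow/Gronwall bound with careful attention to exponential moments of the L\'evy-area-type integrals) is needed, and finite moments of each $J_\alpha$ alone do not give it.
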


\section{Geometric Castell--Gaines methods}
Consider the truncated exponential Lie series $\hat\psi_{t_n,t_{n+1}}$ 
across the interval $[t_n,t_{n+1}]$. 
We approximate higher order multiple Stratonovich integrals 
across each time-step by their
expectations conditioned on the increments 
of the Wiener processes on suitable subdivisions 
(Gaines and Lyons~\cite{GL}). An approximation
to the solution of the stochastic differential equation~\eqref{sde}
across the interval $[t_n,t_{n+1}]$ is given by the 
flow generated by the truncated and conditioned 
exponential Lie series $\hat\psi_{t_n,t_{n+1}}$ via
\begin{equation*}
y_{t_{n+1}}\approx\exp\bigl(\hat\psi_{t_n,t_{n+1}}\bigr)\,\circ y_{t_n}\,.
\end{equation*}
Hence the solution to the stochastic 
differential equation~\eqref{sde} can be approximately computed
by solving the ordinary differential system 
(see Castell and Gaines~\cite{CG}; Misawa~\cite{Mi})
\begin{equation}\label{castellgaines}
u'(\tau)=\hat\psi_{t_n,t_{n+1}}\circ u(\tau)
\end{equation}
across the interval $\tau\in[0,1]$. Then if
$u(0)=y_{t_n}$ we will get $u(1)\approx y_{t_{n+1}}$. 
We must choose a sufficiently accurate ordinary differential integrator 
to solve~\eqref{castellgaines}---we implicitly assume this henceforth.

The set of governing vector fields $V_i$, $i=0,1,\ldots,d$,
prescribes a map from the driving path process $w\equiv(W^1,\ldots,W^d)$ 
to the unique solution process $y\in\M$ 
to the stochastic differential equation~\eqref{sde}.
The map $w\mapsto y$ is called the It\^o map. 
Recall that we assume the vector fields are smooth.
When there is only one driving Wiener process ($d=1$) 
the It\^o map is continuous in the topology of uniform convergence
(Theorem~1.1.1.\ in Lyons and Qian~\cite{LQ}).
When there are two or more driving processes ($d\geq2$)
the Universal Limit Theorem (Theorem~6.2.2.\ 
in Lyons and Qian~\cite{LQ}) tells us that
the It\^o map is continuous in the 
$p$-variation topology, in particular for 
$2\leq p<3$. A Wiener path with $d\geq2$ 
has $p$-variation with $p>2$, and the $p$-variation 
metric in this case includes information
about the L\'evy chordal areas of the path 
(Lyons~\cite{L}). Hence we must 
choose suitable piecewise smooth approximations
to the driving path process $w$.
The following result follows from the corresponding result for 
ordinary differential equations in  
Hairer, Lubich and Wanner~\cite{HLW} (p.~112) as well
as directly from Chapter VIII in Malliavin~\cite{Malliavin}
on the Transfer Principle (see also Emery~\cite{Epaper}).

\begin{lemma}\label{tngt}
A necessary and sufficient condition for
the solution to the stochastic differential equation~\eqref{sde}
to evolve on a smooth $n$-dimensional submanifold $\mathcal M$ of\/ $\R^N$ 
($n\leq N$) up to a stopping time $T_*$
is that $V_i(y,t)\in T_y\mathcal M$ for all $y\in\mathcal M$,
$i=0,1,\ldots,d$.
\end{lemma}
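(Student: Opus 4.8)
The plan is to reduce the statement to the analogous deterministic fact via the Transfer Principle (Malliavin~\cite{Malliavin}, Ch.~VIII), which asserts that a Stratonovich stochastic differential equation may be manipulated, at least as regards the geometry of its sample paths, exactly as the ordinary differential equation obtained by replacing each $\mathrm{d}W^i_\tau$ with $\mathrm{d}\tau$. Concretely, I would first fix an isometric embedding $\mathcal M\hookrightarrow\R^N$ and choose, locally near a point $y\in\mathcal M$, a defining submersion $F\colon U\subseteq\R^N\ra\R^{N-n}$ with $\mathcal M\cap U=F^{-1}(0)$ and $\mathrm{D}F(y)$ of full rank; the condition ``$y$ evolves on $\mathcal M$ up to $T_*$'' then reads $F(y_{t\wedge T_*})=0$.

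For the \emph{sufficiency} direction I would apply the Stratonovich chain rule (It\^o's lemma in Stratonovich form) to the process $t\mapsto F(y_t)$. Because the Stratonovich differential obeys the ordinary Leibniz and chain rules, one obtains
\begin{equation*}
F(y_t)=F(y_0)+\sum_{i=0}^d\int_0^t \mathrm{D}F(y_\tau)\,V_i(y_\tau,\tau)\,\mathrm{d}W^i_\tau\,.
\end{equation*}
If $V_i(y,\tau)\in T_y\mathcal M=\ker\mathrm{D}F(y)$ for every $y\in\mathcal M$ and every $i$, then along any sample path that has stayed in $\mathcal M$ up to time $\tau$ each integrand vanishes, so $F(y_t)$ solves the trivial equation with zero initial data; by pathwise uniqueness for the Stratonovich equation (guaranteed by smoothness of the $V_i$) it remains identically zero up to the first exit time $T_*$, which shows the solution stays in $\mathcal M$. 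A clean way to make the ``stays in $\mathcal M$'' bookkeeping rigorous is a localization/stopping argument: define $T_*$ as the exit time of $y$ from a tubular neighbourhood of $\mathcal M$, run the above computation up to $T_*$, and conclude $F(y_{t\wedge T_*})\equiv0$, hence $y_{t\wedge T_*}\in\mathcal M$, hence $T_*$ is in fact the claimed stopping time. For the \emph{necessity} direction, if the solution lies in $\mathcal M$ up to $T_*$ then its Stratonovich differential $\sum_i V_i(y_\tau,\tau)\,\mathrm{d}W^i_\tau$ must be tangent to $\mathcal M$; evaluating at $\tau=0$ (with $y_0$ arbitrary in $\mathcal M$, since the manifold is covered by starting points) and using that the noises $W^1,\dots,W^d$ are independent so their quadratic-covariation bracket matrix is nondegenerate, one can separate the contributions of the individual vector fields and read off $V_i(y,\tau)\in T_y\mathcal M$ for each $i$; a standard argument with the support theorem or simply varying the driving increments pins this down for all $y\in\mathcal M$ and all $\tau<T_*$.

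The main obstacle is the necessity direction, specifically disentangling the individual $V_i$ from the single tangency condition on the aggregate differential: one cannot just ``set one Wiener increment to be nonzero and the others zero'' pathwise, so the argument must go through either the independence of the $W^i$ (matching quadratic variations $\langle W^i,W^j\rangle_t=\delta_{ij}t$ forces each $\mathrm{D}F(y)V_i$ to vanish) or through a support-theorem argument approximating the solution by ODE solutions driven by smooth controls, where the deterministic version of Lemma~\ref{tngt}---exactly the statement quoted from Hairer, Lubich and Wanner~\cite{HLW}, p.~112---applies directly. I would present the proof by invoking the Transfer Principle to import the deterministic result wholesale, and then supply the independence-of-noises observation as the one genuinely stochastic ingredient needed to handle the drift term $V_0$ together with the diffusion terms $V_1,\dots,V_d$ on an equal footing.
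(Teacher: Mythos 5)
Your proposal is correct and matches the paper's approach: the paper offers no detailed proof of Lemma~\ref{tngt}, deducing it exactly as you propose from the deterministic counterpart in Hairer, Lubich and Wanner (p.~112) together with the Transfer Principle of Malliavin (Chapter~VIII), with Emery cited for the same principle. Your additional sketch (Stratonovich chain rule applied to a local defining submersion for sufficiency, and separating the individual $V_i$ via quadratic covariation or a support-theorem argument for necessity) simply fills in details the paper delegates to those references.
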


Hence the stochastic Taylor expansion for the flow $\varphi_t$
is a diffeomorphism on $\M$. However a truncated version
of the stochastic Taylor expansion for the flow $\hat\varphi_t$ 
will not in general keep you on the manifold, i.e.\ if $y_0\in\M$
then $\hat\varphi_t\circ y_0$ need not necessarily lie in $\M$.
On the other hand, the exponential Lie series $\psi_t$, or any truncation 
$\hat\psi_t$ of it, lies in $\mathfrak X(\mathcal M)$. 
By Lemma~\ref{tngt} this is a necessary and sufficient condition 
for the corresponding flow-map $\exp\hat\psi_t$ to
be a diffeomorphism on $\mathcal M$.
Hence if $u(0)=y_{t_n}\in\mathcal M$, 
then $y_{t_{n+1}}\approx u(1)\in\mathcal M$.
When solving the ordinary differential equation~\eqref{castellgaines},
classical geometric integration methods, 
for example Lie group integrators such as 
Runge--Kutta Munthe-Kaas
methods, over the interval $\tau\in[0,1]$
will numerically ensure 
$y_{t_{n+1}}$ stays in $\mathcal M$. 
Additionally, as the following result reveals, numerical
methods constructed using the Castell--Gaines Lie series
approach can also be more accurate 
(a proof is provided in Appendix~\ref{secondbigproof}).
We define the \emph{strong global error} at time $T$ associated with
an approximate solution $\hat y_T$ as
$\mathcal E\equiv\|y_T-\hat y_T\|_{L^2}$.

\begin{theorem}\label{th:uls}
In the case of two independent Wiener processes
and under the assumptions of Theorem~\ref{th:conv},
for any initial condition $y_0\in\mathcal M$ and a
sufficiently small fixed stepsize $h=t_{n+1}-t_n$,
the order~$1/2$ Lie series integrator
is globally more accurate in $L^2$
than the order~$1/2$ stochastic Taylor integrator.
In addition, in the case of one Wiener process, 
the order~$1$ and $3/2$ 
\emph{uniformly accurate exponential Lie series integrators}
generated by $\hat\psi^{(1)}_{t_n,t_{n+1}}=J_0V_0
+J_1V_1+\tfrac{h^2}{12}\bigl([V_1,[V_1,V_0]]\bigr)$ and
\begin{equation*}
\hat\psi^{(3/2)}_{t_n,t_{n+1}}=J_0V_0
+J_1V_1+\tfrac12(J_{01}-J_{10})[V_0,V_1]
+\tfrac{h^2}{12}\bigl([V_1,[V_1,V_0]]\bigr)\,,
\end{equation*}
respectively, are globally more accurate in $L^2$ than their
corresponding stochastic Taylor integrators. In other words,
if $\mathcal E^{\mathrm{ls}}_m$ denotes the global error of 
the exponential Lie series integrators of order $m$ above, 
and $\mathcal E^{\mathrm{st}}_m$ is the global error 
of the stochastic Taylor integrators of the 
corresponding order, then 
$\mathcal E^{\mathrm{ls}}_m\leq\mathcal E^{\mathrm{st}}_m$ for
$m=1/2,1,3/2$.
\end{theorem}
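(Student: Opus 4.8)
The plan is to compare, for a single step $[t_n,t_{n+1}]$ of length $h$, the local one–step errors of the Lie series integrator $\exp\hat\psi_{t_n,t_{n+1}}\circ y_{t_n}$ and the stochastic Taylor integrator $\hat\varphi_{t_n,t_{n+1}}\circ y_{t_n}$ against the true flow $\varphi_{t_n,t_{n+1}}\circ y_{t_n}$, and then assemble these into the global errors $\mathcal E^{\mathrm{ls}}_m$ and $\mathcal E^{\mathrm{st}}_m$ via the standard Lady Windermere's fan argument (stability of the true flow in $L^2$, which holds under the $2m+1$ bounded–derivative hypothesis of Theorem~\ref{th:conv}). The key is that both schemes share exactly the same \emph{leading} local error term at order $h^{m+1}$ in $L^2$: writing $\varphi_t=\exp\psi_t$, the difference between $\hat\psi_t$ and the full $\psi_t$, and between $\hat\varphi_t$ and the full Taylor series, both first appear at the same stochastic–integral order. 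So one expands $\exp\hat\psi_t - \varphi_t$ and $\hat\varphi_t - \varphi_t$ to one order beyond the truncation, identifies the common dominant term, and shows the \emph{next} correction term is strictly smaller for the Lie series scheme.

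First I would set up the local error expansions. For the order–$1/2$ case with two Wiener processes, $\hat\psi^{(1/2)}=J_0V_0+J_1V_1+J_2V_2$, so $\exp\hat\psi^{(1/2)} = \I + \hat\psi^{(1/2)} + \tfrac12(\hat\psi^{(1/2)})^2+\cdots$; comparing with $\varphi_t$ and with the order–$1/2$ Taylor scheme $\hat\varphi^{(1/2)} = \I + J_0V_0+J_1V_1+J_2V_2$, one sees the Lie series scheme already captures the symmetric second–order terms $\tfrac12 J_i J_j(V_iV_j+V_jV_i)$ that the Taylor scheme omits. The leading uncaptured term in both is the Lévy–area term $\tfrac12(J_{12}-J_{21})[V_1,V_2]$ plus the remaining symmetric pieces for Taylor. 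The crucial computation is the $L^2$ norm of the \emph{one–step error difference}: one shows $\exp\hat\psi_t\circ y_0 - \varphi_t\circ y_0$ has a strictly smaller $L^2$ leading coefficient than $\hat\varphi_t\circ y_0 - \varphi_t\circ y_0$, because the extra terms retained by exponentiation are exactly those symmetric double–integral terms and these are precisely the ones whose omission inflates the Taylor error. For the one–Wiener–process order–$1$ and order–$3/2$ cases, the added correction $\tfrac{h^2}{12}[V_1,[V_1,V_0]]$ in $\hat\psi^{(1)}$ and $\hat\psi^{(3/2)}$ is designed so that $\exp\hat\psi$ reproduces the true flow through one further order than the bare truncated series would — I would verify this by matching the Chen–Strichartz coefficients, in particular using $\mathbb E[J_{01}-J_{10}]=0$ but $\mathbb E[(J_{01}-J_{10})^2]\sim h^3/12$ (via the algebra of iterated integrals) to see why the deterministic $h^2/12$ term is the correct ``mean–square completion.''

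Then I would globalize: sum the local errors over $N=T/h$ steps. Since the true flow map $\varphi_{s,t}$ is Lipschitz in $L^2$ on $\M$ with constant $1+O(h)$ per step (from the bounded–derivative assumption, as in Theorem~\ref{th:conv}), the global error is bounded by the $\ell^2$–type accumulation of local errors, $\mathcal E_m \lesssim \big(N \cdot \mathbb E[\text{(one–step error)}^2]\big)^{1/2} \sim C\, h^{m}$, and — this is the point — the constant $C$ inherits the strict inequality from the local comparison because the leading global error coefficient is, to leading order in $h$, $\sqrt{T}$ times the leading local $L^2$ coefficient. Hence $\mathcal E^{\mathrm{ls}}_m \le \mathcal E^{\mathrm{st}}_m$ for all sufficiently small $h$, with the inequality strict unless the relevant brackets vanish (commutative case). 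I would invoke Theorem~\ref{th:conv} for the square–integrability of $\exp\hat\psi_t\circ y_0$ needed to make these $L^2$ manipulations legitimate, and Lemma~\ref{tngt} only for the already–stated geometric consequence that $\exp\hat\psi_t$ stays on $\M$.

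\textbf{Main obstacle.} The hard part will be making the ``strictly smaller leading coefficient'' comparison rigorous rather than merely formal: one must show that the extra terms the exponential retains do not accidentally \emph{cancel} against the omitted higher–order terms in a way that negates the gain, and that cross terms between the leading error and sub–leading errors do not reverse the inequality at the $h$–order that actually matters. This requires a careful bookkeeping of the $L^2$–orthogonality structure of the multiple Stratonovich integrals $J_\alpha$ (which ones have zero mean, which products have nonzero expectation, and the exact scaling in $h$), essentially an Itô–algebra computation; the details parallel the linear–equation analysis of Lord, Malham and Wiese~\cite{LMW}, and I would carry it out fully in Appendix~\ref{secondbigproof}.
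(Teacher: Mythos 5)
Your overall strategy (compare one-step flow remainders, then accumulate) is the same family of argument as the paper's Appendix~\ref{secondbigproof}, but two specific steps in your proposal are wrong or missing, and they are exactly the points the paper's proof turns on. First, your opening claim that ``both schemes share exactly the same leading local error term'' and that the gain appears only in the \emph{next} correction is not what happens: the leading local remainders differ already at the first omitted order, e.g.\ $R^{\text{ls}}=\tfrac12(J_{12}-J_{21})[V_1,V_2]$ versus $R^{\text{st}}=J_{12}V_1V_2+J_{21}V_2V_1$ for the order-$1/2$ schemes. The comparison is made at that leading order itself: since $J_{12}+J_{21}=J_1J_2$ is uncorrelated with the L\'evy-area combination $J_{12}-J_{21}$, the Taylor remainder decomposes $L^2$-orthogonally into the Lie-series remainder plus a symmetric part, giving the exact statement $\mathbb E\bigl((R^{\text{st}}\circ y_0)^{\text{T}}R^{\text{st}}\circ y_0\bigr)=\mathbb E\bigl((R^{\text{ls}}\circ y_0)^{\text{T}}R^{\text{ls}}\circ y_0\bigr)+h^{2m}U^{\text{T}}BU+\mathcal O\bigl(h^{2m+1/2}\bigr)$ with an explicit positive semi-definite $B$ in each of the three cases (this is~\eqref{eq:compare}); your later sentences gesture at this but contradict your stated plan, and without the explicit covariance computation the ``no accidental cancellation'' worry you flag in your obstacle paragraph is not resolved.

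Second, your globalization step would fail as written. The bound $\mathcal E\lesssim\bigl(N\,\mathbb E[(\text{one-step error})^2]\bigr)^{1/2}$ is only valid for local error terms with (conditionally) zero mean; biased local terms accumulate linearly, losing a whole order and contributing to the global error through their expectations at the \emph{same} global order $h^m$. This bias/martingale distinction is the crux of the paper's global-error argument via the accumulated remainder $\mathcal R$ in~\eqref{eq:geest}, and it is the only reason the deterministic corrections appear in the schemes at all: $\tfrac{h^2}{12}[V_1,[V_1,V_0]]$ is precisely $\tfrac16\,\mathbb E\bigl[J_{110}-2J_{101}+J_{011}\bigr]\,[V_1,[V_1,V_0]]$, the expectation of the omitted double-bracket Lie-series term, included so that $R^{\text{ls}}$ has zero mean to the relevant order, while the $\tfrac14h^2(V_0V_1^2+V_1^2V_0)$-type pieces of $R^{\text{st}}$ must be retained for the same reason. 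Your explanation of the $h^2/12$ term as a ``mean-square completion'' coming from $\mathbb E[(J_{01}-J_{10})^2]\sim h^3/12$ is incorrect: that variance concerns the $[V_0,V_1]$ term, does not produce the coefficient of $[V_1,[V_1,V_0]]$, and the correction does not raise the order of the scheme. Without tracking which remainder terms have nonzero expectation and how they accumulate, the transfer of the local inequality to $\mathcal E^{\text{ls}}_m\leq\mathcal E^{\text{st}}_m$ is not justified, particularly for the order-$1$ and order-$3/2$ integrators.
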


\emph{Remarks.}
\begin{remunerate}
\item The result for $\hat\psi^{(3/2)}$ is new.
That the order-$1/2$ Lie series integrator
(for two Wiener processes) and the 
order~$1$ integrator generated by $\hat\psi^{(1)}$ are
uniformly more accurate confirms the asymptotically efficient
properties of these schemes proved by Castell and Gaines~\cite{CG}.
The proof follows along the lines of an analogous result
for linear stochastic systems considered in 
Lord, Malham and Wiese~\cite{LMW}.
\item Consider the order~$1/2$ exponential Lie series with
no vector field commutations. Solving the 
ordinary differential equation~\eqref{castellgaines}
using an (ordinary) Euler Munthe-Kaas method and approximating 
$\mathrm{dexp}^{-1}_\sigma\approx\I$ is equivalent to the 
order~$1/2$ stochastic Taylor Munthe-Kaas method
(for the same Lie group and action).
\end{remunerate}

\section{Numerical examples}

\subsection{Rigid body} We consider the dynamics of a rigid body
such as a satellite (see Marsden and Ratiu~\cite{MR}).
We will suppose that the rigid body is perturbed by two
independent multiplicative stochastic processes $W^1$
and $W^2$ with the corresponding vector fields
$V_i(y)\equiv\xi_i(y)\,y$,
for $i=0,1,2$, with $\xi_i\in\mathfrak{so}(3)$.
If we normalize the initial data $y_0$ so that $|y_0|=1$
then the dynamics evolves on $\M=\mathbb S^2$. We
naturally suppose $\G=SO(3)$, 
and $\Lambda_{y_0}(S)\equiv Sy_0$ so that 
$\lambda_{\xi_i}(y)=\xi_i(y)\,y$, 
and we can pull back the flow generated by $V$ on $\M$ 
to the flow on $\G$ generated by
$X_{\xi_i}(S,t)=\xi_i\bigl(\Lambda_{y_0}(S)\bigr)\,S$, $i=0,1,2$. 
We use the following matrix representation 
for the $\xi_i(y)\in\mathfrak{so}(3)$:
\begin{equation*}
\xi_i(y)=\begin{pmatrix}
0 & -y_3/\alpha_{i,3} & y_2/\alpha_{i,2} \\
y_3/\alpha_{i,3} & 0 & -y_1/\alpha_{i,1} \\
-y_2/\alpha_{i,2} & y_1/\alpha_{i,1} & 0 
\end{pmatrix}\,,
\end{equation*}
where the constants $\alpha_{i,j}$ for $j=1,2,3$ are chosen
so that the vector fields $V_i$ and matrices $\xi_i$ do not
commute for $i=0,1,2$: $\alpha_{0,1}=3$, $\alpha_{0,2}=1$, $\alpha_{0,3}=2$,
$\alpha_{1,1}=1$, $\alpha_{1,2}=1/2$, $\alpha_{1,3}=3/2$,
$\alpha_{2,1}=1/4$, $\alpha_{2,2}=1$, $\alpha_{2,3}=1/2$.
The vector fields $V_i$ 
satisfy the conditions of 
Theorem~\ref{th:conv} since the manifold is compact in this case.

\begin{figure}
  \begin{center}
  \includegraphics[width=0.9\textwidth]{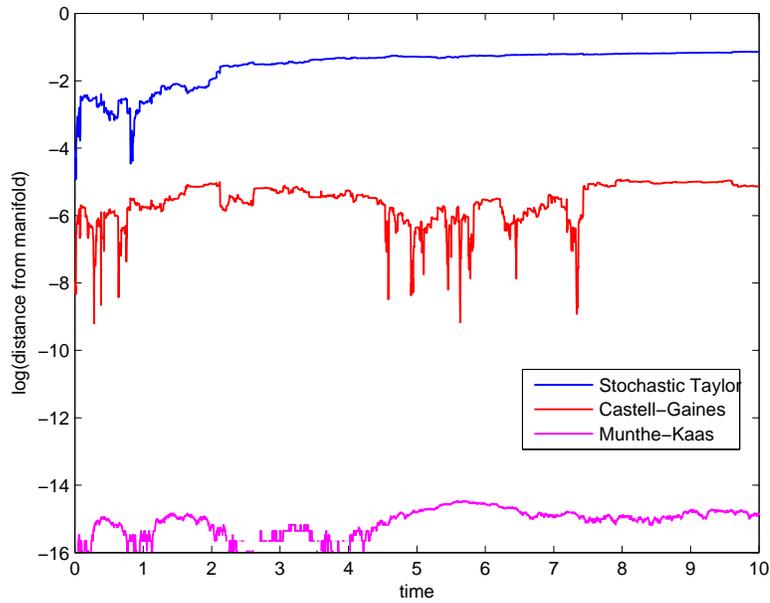}
\includegraphics[width=0.9\textwidth]{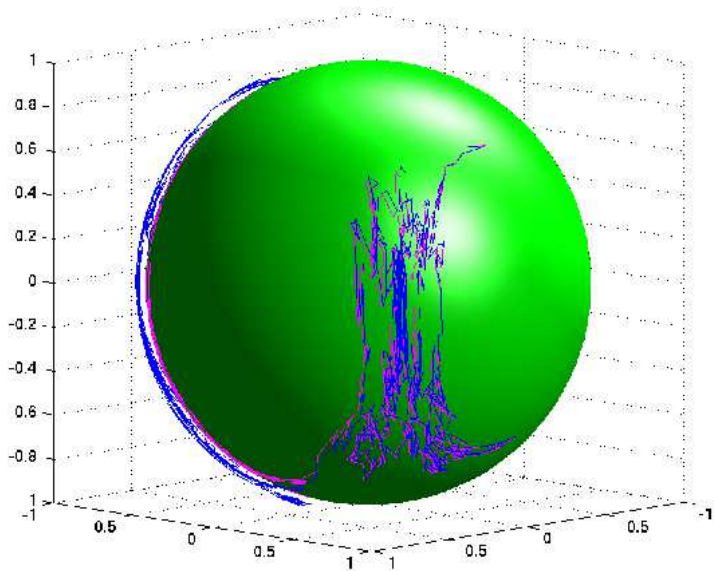}
  \end{center}
  \caption{\emph{Rigid body:} 
We show the log-distance of the approximate solution 
to the unit sphere as a function of time for each of the methods. 
Below we show the approximate solutions as a function of time
for the stochastic Taylor (blue) and Munthe-Kaas methods (magenta).
The trajectory starts at the top right and eventually drifts over
the left horizon.}
  \label{errorfig}
\end{figure}

We will numerically solve~\eqref{sde} using three
different order~$1$ methods: stochastic Taylor,
stochastic Taylor Munthe-Kaas based on~\eqref{STMK}
and Castell--Gaines (a standard non-geometric
Runge--Kutta method is used to solve 
the ordinary differential equation~\eqref{castellgaines}).
The vector field compositions $V_iV_j$ needed for the 
stochastic Taylor and Castell--Gaines methods are readily computed.
For the Munthe-Kaas method we note that
we have $v_{\xi_i}\circ\oo=\xi_i(y_0)$ and 
\begin{equation*}
v_{\xi_i}v_{\xi_j}\circ\oo=\hat A(y_0,y_0;\alpha_i,\alpha_j)
-\tfrac12[\xi_i(y_0),\xi_j(y_0)]\,.
\end{equation*}
Here $\oo\in\mathfrak{so}(3)$ is the zero element on the Lie algebra,
and for all $y,z\in\R^3$ we define
\begin{equation*}
A(y,z;\alpha,\beta)\equiv
\begin{pmatrix} 
\bigl(\tfrac{y_2z_3}{\alpha_{2}}-\tfrac{y_3z_2}{\alpha_{3}}\bigr)\tfrac{1}{\beta_{1}}\\
\bigl(\tfrac{y_3z_1}{\alpha_{3}}-\tfrac{y_1z_3}{\alpha_{1}}\bigr)\tfrac{1}{\beta_{2}}\\
\bigl(\tfrac{y_1z_2}{\alpha_{1}}-\tfrac{y_2z_1}{\alpha_{2}}\bigr)\tfrac{1}{\beta_{3}}
\end{pmatrix}\,,
\end{equation*}
and $\hat{\phantom{u}}\colon\R^3\ra\mathfrak{so}(3)$ denotes
the vector space isomorphism $\sigma\mapsto\hat\sigma$ where
\begin{equation*}
\hat\sigma\equiv\begin{pmatrix} 
0 & -\sigma_3 & \sigma_2 \\
\sigma_3 & 0 & -\sigma_1 \\
-\sigma_2 & \sigma_1 & 0
\end{pmatrix}\,.
\end{equation*}
Note that $\hat y\,z\equiv y\wedge z$ 
(see Marsden and Ratiu~\cite{MR}).
Note also since $\sigma\in\mathfrak{so}(3)$, 
$\exp\sigma\in SO(3)$ can be conveniently and cheaply computed
using Rodrigues' formula (see Marsden and Ratiu~\cite{MR}
or Iserles \textit{et al.\ }~\cite{IMNZ}).

In Figure~\ref{errorfig} we show the distance from the 
manifold $\mathbb S^2$ of each the three approximations; we
start with initial data $y_0=(\sqrt 2,\sqrt 2, 0)^{\text{T}}$.
The stochastic Taylor Munthe-Kaas method can be seen to 
preserve the solution in the unit sphere to within machine error.
We also see that the stochastic Taylor method
clearly drifts off the sphere as the integration time progresses,
as does the non-geometric Castell-Gaines method---which does however
remain markedly closer to the manifold than the stochastic Taylor scheme.

\begin{figure}[ht]
  \begin{center}
  \includegraphics[width=0.86\textwidth]{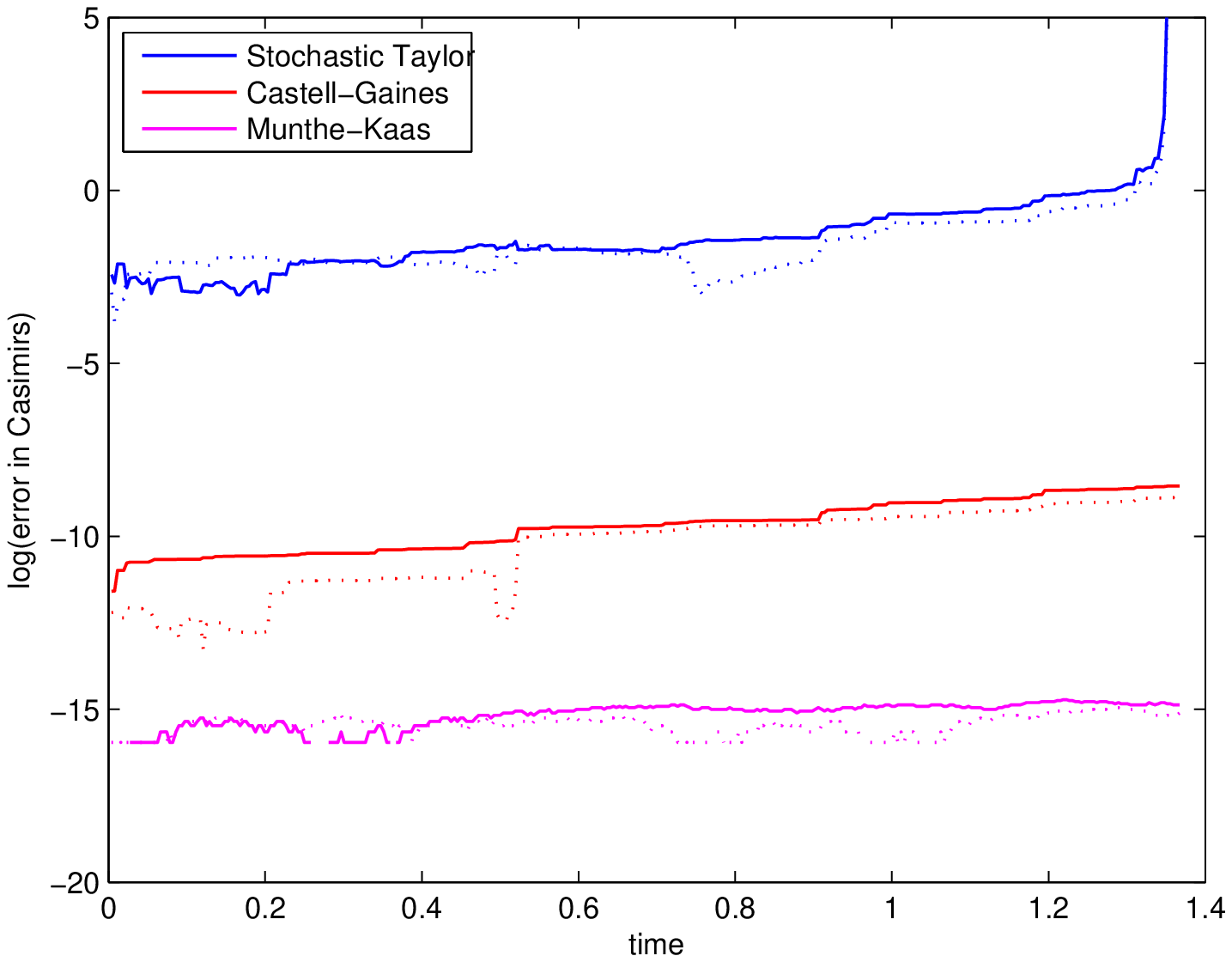}
\includegraphics[width=0.86\textwidth]{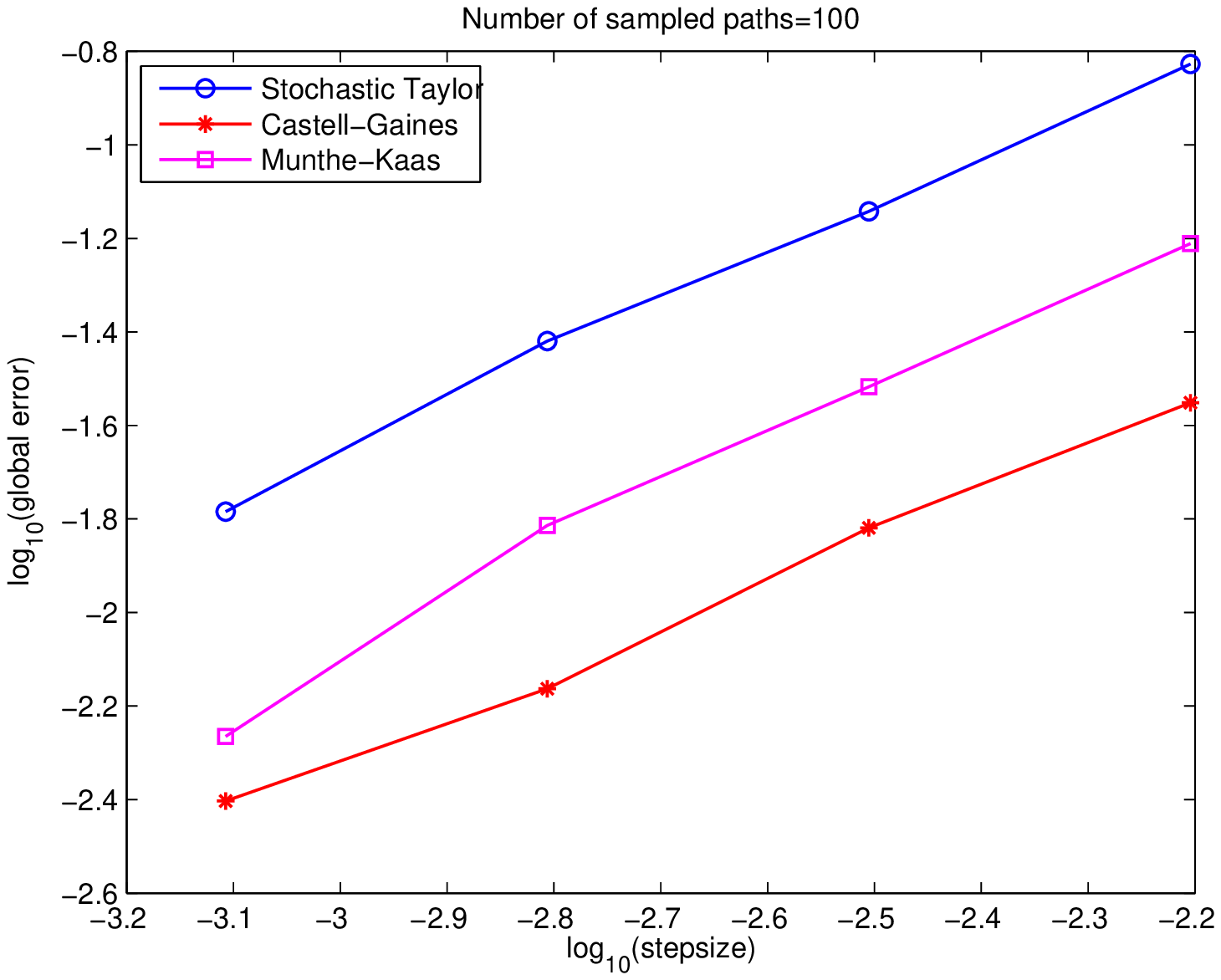}
  \end{center}
  \caption{\emph{Autonomous underwater vehicle:} 
We show the log-distance of the approximate solution 
to the two Casimirs $C_1=\pi\cdot p$ (dotted line)
and $C_2=|p|^2$ (solid line) as a function of time 
for each of the methods. Below, we also show the 
global error as a function of stepsize.}
  \label{errorfig2}
\end{figure}

\subsection{Autonomous underwater vehicle}
The dynamics of an ellipsoidal autonomous underwater vehicle 
is prescribed by the state $y=(\pi,p)\in\mathfrak{se}(3)^\ast$ 
where $\pi\in\mathfrak{so}(3)^\ast$ is its angular momentum and 
$p\in(\R^3)^\ast$ its linear momentum 
(see Holmes, Jenkins and Leonard~\cite{HJL},
Egeland, Dalsmo and S\o rdalen~\cite{EDS}
and Marsden and Ratiu~\cite{MR}).
We suppose that the vehicle is
perturbed by two independent multiplicative stochastic
processes. The governing vector
fields are for $i=0,1,2$:
\begin{equation*}
V_i(y)=\mathrm{ad}^\ast_{\xi_i}\circ y\,.
\end{equation*}
Here $\xi_i(y)=\bigl(\omega_i(y),u_i(y)\bigr)\in\mathfrak{se}(3)$
where $\omega_i(y)=I_i^{-1}\pi$ and $u_i(y)=M_i^{-1}p$
are the angular and linear velocity, and 
$I_i=\mathrm{diag}(\alpha_{i,1},\alpha_{i,2},\alpha_{i,3})$ and 
$M_i=\mathrm{diag}(\beta_{i,1},\beta_{i,2},\beta_{i,3})$ are the constant
moment of inertia and mass matrices, respectively. Explicitly for 
$\xi\in\mathfrak{se}(3)$ we have
\begin{equation*}
\mathrm{ad}^\ast_{\xi}\circ y\equiv(\pi\wedge\omega+p\wedge u,p\wedge\omega)\,.
\end{equation*}
The system of vector fields $V_i$, $i=0,1,2$ represents
the Lie--Poisson dynamics on $\M=\mathfrak{se}(3)^\ast$
(Marsden and Ratiu~\cite{MR}). There are two independent
Casimir functions 
$C_k\colon\mathfrak{se}(3)^\ast\ra\R$,
$k=1,2$, namely $C_1=\pi\cdot p$ and $C_2=|p|^2$;
these are conserved by the flow 
on $\mathfrak{se}(3)^\ast$. Note that the 
Hamiltonian, i.e.\ total kinetic energy 
$\tfrac12(\pi\cdot\omega+p\cdot u)$, is 
also exactly conserved (and helpful for establishing
the sufficiency conditions in Theorem~\ref{th:conv}), but that
is not our focus here.

If $\G=SE(3)\cong SO(3)\times\R^3$,
then the coadjoint action of $SE(3)$ on $\mathfrak{se}(3)^\ast$, 
$\mathrm{Ad}^\ast\colon SE(3)\times\mathfrak{se}(3)^\ast\ra\mathfrak{se}(3)^\ast$
is defined for all $S=(s,\rho)\in SE(3)$, where $s\in SO(3)$ and
$\rho\in\R^3$, and $y\in\mathfrak{se}(3)^\ast$ by:
$\Lambda_y\circ S=\mathrm{Ad}^\ast_{S^{-1}}\circ y
\equiv\bigl(s\pi+\rho\wedge(sp),sp\bigr)$.
The corresponding infinitesimal action
$\lambda\colon\mathfrak{se}(3)\times\mathfrak{se}(3)^\ast\ra\mathfrak{se}(3)^\ast$
for all $\xi\in\mathfrak{se}(3)$ and $y\in\mathfrak{se}(3)^\ast$
is given by (see Marsden and Ratiu~\cite{MR}, p.~477)
\begin{equation*}
\lambda_\xi\circ y=-\mathrm{ad}^\ast_{\xi}\circ y\,.
\end{equation*}
Since $\mathrm{ad}^\ast_{\xi}(y)=-\lambda_\xi(y)=\lambda_{-\xi}(y)$
the governing set of vector fields on $\mathfrak{se}(3)^\ast$ are
\begin{equation*}
V_i(y)=\lambda_{-\xi_i}\circ y\,.
\end{equation*}
We can now pull back this flow on $\mathfrak{se}(3)^\ast$
to a flow on $SE(3)$ via $\Lambda_{y_0}$. The corresponding
flow on $SE(3)$ is generated by the governing 
set of vector fields for $i=0,1,2$:
\begin{equation*}
X_{-\xi_i}\circ S=-\bigl(\omega_i(y)\wedge s,\omega_i(y)\wedge\rho+u_i(y)\bigr)\,,
\end{equation*}
with $y=\Lambda_{y_0}(S)$.

To aid implementation note that 
$SE(3)=\bigl\{(s,\rho)\in SE(3)\colon s\in SO(3),~\rho\in\R^3\bigr\}$
embeds into $SL(4;\R)$ via the map
\begin{equation*}
S=(s,\rho)\mapsto\begin{pmatrix} s & \rho \\ O^{\text{T}} & 1 \end{pmatrix}\,,
\end{equation*}
where $O$ is the three-vector of zeros. Also
$\mathfrak{se}(3)$ is isomorphic to a Lie subalgebra of
$\mathfrak{sl}(4;\R)$ with elements of the form
\begin{equation*}
\sigma=(\theta,\zeta)\mapsto
\begin{pmatrix} \hat\theta & \zeta \\ O^{\text{T}} & 0 \end{pmatrix}\,.
\end{equation*}
Hence the governing vector fields on $SE(3)$ are of the form
$X_{\xi_i}=-\xi_i(y)\,S$, where 
\begin{equation*}
\xi_i(y)=\begin{pmatrix} \hat\omega_i(\pi) & u_i(p) \\ 
                         O^{\text{T}} & 0 \end{pmatrix}\,.
\end{equation*}
The governing vector fields on $\mathfrak{se}(3)$ are 
$v_{\xi_i}(\sigma)=-\mathrm{dexp}_\sigma\circ\xi_i\bigl(\Lambda_{y_0}(\exp\sigma)\bigr)$.
Again the vector field compositions $V_iV_j$ needed for the 
stochastic Taylor and Castell--Gaines methods can be computed
straightforwardly. Direct calculation also 
reveals that in block matrix form
\begin{equation*}
v_{\xi_i}v_{\xi_j}\circ\oo=\begin{pmatrix}
\hat A(\pi_0,\pi_0;\alpha_i,\alpha_j)+\hat A(p_0,p_0;\beta_i,\alpha_j) &
A(\pi_0,p_0;\alpha_i,\beta_j) \\
O^{\text{T}} & 0\end{pmatrix}-\tfrac12\,[\xi_i(y_0),\xi_j(y_0)]\,.
\end{equation*}
Here $A(y,z;\alpha,\beta)$ is defined as for the rigid body example.
Note that the exponential map 
$\exp_{\mathfrak{se}(3)}\colon\mathfrak{se}(3)\ra SE(3)$ is
defined for all $\sigma=(\theta,\zeta)\in\mathfrak{se}(3)$ by
\begin{equation*}
\exp_{\mathfrak{se}(3)}\sigma=\begin{pmatrix}
\exp_{\mathfrak{so}(3)}\hat\theta & f(\theta)\zeta \\
O^{\text{T}} & 1\end{pmatrix}\,,
\end{equation*}
where $\exp_{\mathfrak{so}(3)}$ is the exponential map
from $\mathfrak{so}(3)$ to $SO(3)$ which can be
computed using Rodrigues' formula and 
(see Bullo and Murray~\cite{BM}, p.~5)
\begin{equation*}
f(\theta)=I_{3\times 3}+(1-\cos\|\theta\|)\hat\theta/\|\theta\|^2
+\bigl(1-(\sin\|\theta\|)/\|\theta\|\bigr)\hat\theta^2/\|\theta\|^2\,.
\end{equation*}

In Figure~\ref{errorfig2} we show the distance from the 
manifold $\mathfrak{se}(3)^\ast$ of each the three approximations; 
in particular how far
the individual trajectories stray from the Casimirs
$C_1=\pi\cdot p$ and $C_2=|p|^2$. We
start with the initial data 
$y_0=(\sqrt 2,\sqrt 2, 0, 0, \sqrt 2,\sqrt 2)^{\text{T}}$.
As before the stochastic Taylor Munthe-Kaas method can be seen to 
preserve the Casimirs to within machine error.
We also see that the stochastic Taylor method
clearly drifts off the manifold as the integration time progresses
and at a particular time depending on the Wiener path
shoots off very rapidly away from the manifold. 
Note also that for large stepsizes the stochastic Taylor method
is unstable. However
the non-geometric Castell--Gaines and stochastic Munthe-Kaas methods
still give reliable results in that regime. Lastly, although the 
the stochastic Munthe-Kaas method adheres to the manifold to
within machine error, the error of the non-geometric 
Castell--Gaines method is actually smaller.

\section{Conclusions}
We have established and implemented stochastic Lie group
integrators based on stochastic Munthe-Kaas methods and also
derived geometric Castell--Gaines methods. We have also
revealed several aspects of these integrators 
that require further investigation.
\begin{remunerate}
\item We could construct a stochastic nonlinear Magnus method 
by approximating the solution to the stochastic differential
equation~\eqref{sigmaeqn} on the Lie algebra using  
Picard iterations (see Casas and Iserles~\cite{CI}).
\item We would like to develop a practical procedure 
for implementing ordinary Munthe-Kaas methods for 
higher order Castell--Gaines integrators. We need to determine
the element $\xi\colon\M\ra\g$ so that in~\eqref{castellgaines}
we have $\hat\psi=\lambda_\xi$. 
\item We need to determine the properties of
the local and global errors for the stochastic
Munthe-Kaas methods. Also a thorough investigation 
of the stability properties of
the stochastic Munthe-Kaas and Castell--Gaines 
methods is required. For the autonomous underwater vehicle 
simulations they were both superior to the direct stochastic
Taylor method, especially for larger stepsizes.
We also need to compare the relative efficiency of
the methods concerned, in particular to compare 
an optimally efficient geometric Castell--Gaines method
with the stochastic Munthe-Kaas method. 
\item Although we have chiefly confined ourselves to 
driving paths that are Wiener processes, we can extend 
Munthe-Kaas and Castell--Gaines methods to
rougher driving paths (Lyons and Qian~\cite{LQ},
Friz~\cite{F}, Friz and Victoir~\cite{FV}). 
Further, what happens when we consider processes involving jumps?
For example Srivastava, Miller and Grenander~\cite{SMG} consider
jump diffusion processes on matrix Lie groups for Bayesian
inference. Or what if we consider fractional Brownian 
driving paths; Baudoin and Coutin~\cite{BC} 
investigate fractional Brownian motions on Lie groups?
\item Schiff and Shnider~\cite{SS} have used Lie group methods to
derive M\"obius schemes for numerically integrating 
deterministic Riccati systems beyond finite time removable singularities 
and numerical instabilities. They integrate a linear
system of equations on the general linear group $\mathrm{GL}(n)$ which
corresponds to a Riccati flow on the Grassmannian manifold $\mathrm{Gr}(k,n)$
via the M\"obius action map. Lord, Malham and Wiese~\cite{LMW}
implemented stochastic M\"obius schemes and show that they
can be more accurate and cost effective than directly solving
stochastic Riccati systems using stochastic Taylor methods.
We would like to investigate further their effectiveness for 
stochastic Riccati equations arising
in Kalman filtering (Kloeden and Platen~\cite{KP}) 
and to backward stochastic Riccati equations arising in
optimal stochastic linear-quadratic control 
(see for example Kohlmann and Tang~\cite{KT} and
Estrade and Pontier~\cite{EP}).
\item Other areas of potential application of the 
methods we have presented in this paper are for example:
term-structure interest rate models evolving on
finite dimensional invariant manifolds (see
Filipovic and Teichmann~\cite{FT}); stochastic dynamics triggered 
by DNA damage (Chickarmane, Ray, Sauro and Nadim~\cite{CRSN})
and stochastic symplectic integrators
for which the gradient of the solution evolves on the symplectic
Lie group (see Milstein and Tretyakov~\cite{MT}).
\end{remunerate}

\section*{Acknowledgments}
We thank Alex Dragt, Peter Friz, Anders Hansen, Terry Lyons, 
Per-Christian Moan and Hans Munthe--Kaas for stimulating discussions. 
We also thank the anonymous referees, whose suggestions 
and encouragement improved the original manuscript significantly.
SJAM would like to acknowledge the invaluable facilities
of the Isaac Newton Institute where some of the final
touches to this manuscript were completed.

\appendix



\section{Proof of Theorem~\ref{th:conv}}\label{firstbigproof}
We follow the proof for 
linear stochastic differential equations in 
Lord, Malham and Wiese~\cite{LMW} (where further
technical details on estimates for 
multiple Stratonovich integrals can be found).
Suppose $\hat\psi_t\equiv\hat\psi_t(m)$
is the truncated Lie series~\eqref{eq:orderm}.
First we show that $\exp\hat\psi_t\circ y_0\in L^2$. 
We see that for any number $k$, $\bigl(\hat\psi_t\bigr)^k\circ y_0$ 
is a sum of $|\mathbb Q_m|^k$ terms,
each of which is a $k$-multiple product of terms 
$J_\alpha\, c_\alpha\circ y_0$.
It follows that
\begin{equation} \label{eq:product}
\bigl\|\bigl(\hat\psi_t\bigr)^k\circ y_0\bigr\|_{L^2}\leq 
\Bigl(\,\max_{\alpha\in\mathbb Q_m}\|c_\alpha\circ y_0\|\Bigr)^k\,\cdot
\sum_{\stackrel{\alpha_i \in\mathbb Q_m}{i=1,\ldots,k}}
\|J_{\alpha_1}J_{\alpha_2}\cdots J_{\alpha_k}\|_{L^{2}}\,.
\end{equation}
Note that the maximum of the norm of the 
compositions of vector fields $c_\alpha\circ y_0$
is taken over a finite set.
Repeated application of the product rule reveals that
for $i=1,\ldots,k$, each term `$J_{\alpha_1}J_{\alpha_2}\cdots J_{\alpha_k}$' 
in \eqref{eq:product} is the sum of at most 
$2^{2mk-1}$ Stratonovich integrals $J_\beta$, where for $t\leq1$, 
$\|J_{\beta}\|_{L^{2}}\leq2^{4mk-1}\,t^{k/2}$.
Since the right hand side of equation \eqref{eq:product}
consists of $|\mathbb Q_m|^k\,2^{2mk-1}$ Stratonovich integrals $J_\beta$,
we conclude that,
\begin{equation*}
\Bigl\|\bigl(\hat\psi_t\bigr)^k\circ y_0\Bigr\|_{L^2}
\leq\Bigl(\,\max_{\alpha\in\mathbb Q_m}\|c_\alpha\circ y_0\|
\cdot|\mathbb Q_m|\cdot2^{6m}\cdot t^{1/2}\Bigr)^k\,.
\end{equation*}
Hence $\exp\hat\psi_t\circ y_0$ is square-integrable.

Second we prove~\eqref{eq:conv}. Let $\hat y_t$ denote 
the stochastic Taylor series solution, truncated 
to included terms of order up to and including $t^m$.
We have
\begin{equation*}
\bigl\|y_t-\exp\hat\psi_t\circ y_0\bigr\|_{L^2}\leq
\bigl\|y_t-\hat y_t\bigr\|_{L^2}
+\bigl\|\hat y_t-\exp\hat\psi_t\circ y_0\bigr\|_{L^2}\,.
\end{equation*}
We know $y_t\in L^2$---see Lemma III.2.1 in 
Gihman and Skorohod~\cite{GS}. Note that the assumptions
there are fulfilled, since the uniform boundedness of the
derivatives implies uniform Lipschitz continuity of the 
vector fields by the mean value theorem, and uniform 
Lipschitz continuity in turn implies a linear growth 
condition for the vector fields since they are autonomous.
Note that $\hat y_t$ is a strong approximation to $y_t$ 
up to and including terms of order $t^m$,
with the remainder consisting of $\mathcal O(t^{m+1/2})$ terms
(see Proposition 5.9.1 in Kloeden and Platen~\cite{KP}).
It follows from the definition of the exponential Lie series as
the logarithm of the stochastic Taylor series, 
that the terms of order up to and including $t^m$
in $\exp\hat\psi_t\circ y_0$ correspond with $\hat y_t$; 
the error consists of $\mathcal O(t^{m+1/2})$ terms.

\section{Proof of Theorem~\ref{th:uls}}\label{secondbigproof}
Our proof follows along the lines of that for
uniformly accurate Magnus integrators for 
linear constant coefficient systems 
(see Lord, Malham \& Wiese~\cite{LMW} and Malham and Wiese~\cite{MW}).
Let $\varphi_{t_n,t_{n+1}}$ and $\hat\varphi_{t_n,t_{n+1}}$
denote the exact and approximate flow-maps 
constructed on the interval $[t_n,t_{n+1}]$ 
of length $h$.
We define the local flow remainder as 
\begin{equation*}
R_{t_n,t_{n+1}}\equiv
\varphi_{t_n,t_{n+1}}-\hat\varphi_{t_n,t_{n+1}}\,,
\end{equation*}
and so the local remainder is $R_{t_n,t_{n+1}}\circ y_{t_n}$.
Let $R^{\text{ls}}$ and $R^{\text{st}}$ denote the 
local flow remainders corresponding to the exponential
Lie series and stochastic Taylor approximations, 
respectively. 

\subsection{Order $1/2$ integrator: two Wiener processes}
For the global order~$1/2$
integrators we have to leading order
$R^{\text{ls}}=\tfrac12(J_{12}-J_{21})[V_1,V_2]$
and $R^{\text{st}}=J_{12}V_1V_2+J_{21}V_2V_1$.
Note that we have included the terms 
$J_{11}V_1^2$ and $J_{22}V_2^2$ in the integrators.
A direct calculation reveals that
\begin{equation}\label{eq:compare}
\mathbb E\bigl((R^{\text{st}}\circ y_0)^{\text{T}}R^{\text{st}}\circ y_0\bigr)=
\mathbb E\bigl((R^{\text{ls}}\circ y_0)^{\text{T}}R^{\text{ls}}\circ y_0\bigr)
+h^{2m}U^{\text{T}}BU
+\mathcal O\bigl(h^{2m+\frac{1}{2}}\bigr)\,.
\end{equation}
Here $m=1/2$ (for the order~$1/2$ integrators),
$U=(V_1V_2\circ y_0,V_2V_1\circ y_0)^{\text{T}}\in\mathbb R^{2n}$,
and $B\in\mathbb R^{2n\times 2n}$ consists of 
$n\times n$ diagonal blocks of
the form $b_{ij}I_{n\times n}$ where
\begin{equation*}
b=\tfrac14\begin{pmatrix} 1 & 1 \\
                          1 & 1 
\end{pmatrix}\,,
\end{equation*}
and $I_{n\times n}$ is the $n\times n$ identity matrix.
Since $b$ is positive semi-definite,
the matrix $B=b\otimes I_{n\times n}$ is positive semi-definite.
Hence the order~$1/2$ exponential Lie series integrator is
locally more accurate than the corresponding stochastic
Taylor integrator. 

\subsection{Order $1$ integrator: one Wiener process}
For the global order~$1$ integrators we have to leading order
$R^{\text{ls}}=\tfrac12(J_{01}-J_{10})[V_0,V_1]$ and 
$R^{\text{st}}=J_{01}V_0V_1+J_{10}V_1V_0
+J_{111}V_1^3+\tfrac14h^2(V_0V_1^2+V_1^2V_0)$.
The terms of order $h^2$ shown are significant when
we consider the global error in Section~\ref{globalerror} below.
The estimate~\eqref{eq:compare} also applies in
this case with $m=1$ and 
$U=(V_0V_1\circ y_0,V_1V_0\circ y_0,V_1^3\circ y_0)^{\text{T}}\in\mathbb R^{3n}$;
and $B\in\mathbb R^{3n\times 3n}$ consists of 
$n\times n$ diagonal blocks of
the form $b_{ij}I_{n\times n}$ where
\begin{equation*}
b=\tfrac{1}{12}\begin{pmatrix} 3 & 3 & 3\\
                               3 & 3 & 3\\
                               3 & 3 & 5
\end{pmatrix}\,.
\end{equation*} 
Since $b$ is positive semi-definite, the matrix
$B=b\otimes I_{n\times n}$ is positive semi-definite.
Hence the order~$1$ exponential Lie series integrator is
locally more accurate than the corresponding stochastic
Taylor integrator.

\subsection{Order $3/2$ integrator: one Wiener process}
The local flow remainders are 
$R^{\text{ls}}=\tfrac16\bigl(J_{110}-2J_{101}+J_{011}
-\tfrac12h^2\bigr)[V_1,[V_1,V_0]]$ and
$R^{\text{st}}=J_{011}V_0V_1^2+J_{101}V_1V_0V_1+J_{110}V_1^2V_0
+J_{1111}V_1^4-\tfrac14h^2(V_0V_1^2+V_1^2V_0+\tfrac12V_1^4)$.
The terms of order $h^2$ shown are significant when
we consider the global error---but for a 
different reason this time---see Section~\ref{globalerror} below.
Again, the estimate~\eqref{eq:compare} applies in
this case with $m=3/2$ and 
$U=(V_0V_1^2\circ y_0,V_1V_0V_1\circ y_0,
V_1^2V_0\circ y_0,V_1^4\circ y_0)^{\text{T}}\in\mathbb R^{4n}$;
and $B\in\mathbb R^{4n\times 4n}$ consists of 
$n\times n$ diagonal blocks of
the form $b_{ij}I_{n\times n}$ where
\begin{equation*}
b=\tfrac{1}{144}\begin{pmatrix} 11 & 8  & 5  & 12\\
                                8  & 8  & 8  & 12\\
                                5  & 8  & 11 & 12\\
                                12 & 12 & 12 & 24
\end{pmatrix}\,.
\end{equation*}
Again, $B$ is positive semi-definite and 
the order~$3/2$ exponential Lie series integrator is
locally more accurate than the corresponding stochastic
Taylor integrator.

\subsection{Global error}\label{globalerror}
Recall that we define the \emph{strong global error} 
at time $T$ associated with
an approximate solution $\hat y_T$ as
$\mathcal E\equiv\|y_T-\hat y_T\|_{L^2}$.
The exact and approximate solutions can be constructed by successively
applying the exact and approximate flow maps $\varphi_{t_n,t_{n+1}}$
and $\hat\varphi_{t_n,t_{n+1}}$ on the successive intervals 
$[t_n,t_{n+1}]$ to the initial data $y_0$. 
A straightforward calculation shows 
for a small fixed stepsize $h$, 
\begin{equation}\label{eq:geest}
\mathcal E^2
=\mathbb E\,(\mathcal R\circ y_0)^{\text{T}}\,\mathcal R\circ y_0\,,
\end{equation}
up to higher order terms, where
$\mathcal R\equiv\sum_{n=0}^{N-1}
\varphi_{t_{n+1},t_N}\circ R_{t_n,t_{n+1}}
\circ\varphi_{t_0,t_n}$
is the standard accumulated local error contribution
to the global error. The important conclusion 
is that when we construct the 
global error~\eqref{eq:geest}, the terms of leading order
in the local flow remainders $R^{\text{ls}}$ or $R^{\text{st}}$
with zero expectation lose only a half order of convergence 
in this accumulation effect. Hence in the local flow 
remainders shown above,
for the terms of zero expectation, the local superior
accuracy for the Lie series integrators transfers to
the corresponding global errors 
(see Lord, Malham and Wiese~\cite{LMW} for more details).
Terms of non-zero expectation however 
behave like deterministic error terms losing a whole order
(in the local to global convergence); they
contribute to the global error through their expectations. 
Hence we 
include such terms of order $h^2$ in the order~$3/2$ integrators
above and they appear as the terms subtracted from the 
remainders shown. For the order~$1$ integrators we do
not need to include the order $h^2$ terms in the integrator
to obtain the correct mean-square convergence. However
to guarantee that the global error for the exponential
Lie series integrator is always smaller than that for the
stochastic Taylor scheme, we include this term in the integrator.

\label{lastpage}


\begin{thebibliography}{10}

\bibitem{BA} \textsc{G.~Ben Arous},  
\emph{Flots et series de Taylor stochastiques},
Probab. Theory Related Fields, 
81 (1989), pp.~29--77.

\bibitem{B} \textsc{F.~Baudoin}, 
\emph{An introduction to the geometry of
stochastic flows}, Imperial College Press, 2004.

\bibitem{BC} \textsc{F.Baudoin and L.~Coutin},
\emph{Self-similarity and fractional Brownian motions
on Lie groups}, arXiv:math.PR/0603199 v1, 2006.

\bibitem{BM} \textsc{F.~Bullo and R.~M. Murray},
\emph{Proportional derivative (PD) control 
on the Euclidean group}, 
CDS Technical Report 95-010, 1995.

\bibitem{BB} \textsc{K.~Burrage and P.~M. Burrage},
\emph{High strong order methods for non-commutative
stochastic ordinary differential equation systems and 
the Magnus formula}, Phys. D, 133 (1999), pp.~34--48.

\bibitem{CI} \textsc{F.~Casas and A.~Iserles},  
\emph{Explicit Magnus expansions
for nonlinear equations}, 
Cambridge NA reports, 2005. 

\bibitem{C} \textsc{F.~Castell},  
\emph{Asymptotic expansion of stochastic flows},
Probab. Theory Related Fields,
96 (1993), pp.~225--239.

\bibitem{CG} \textsc{F.~Castell and J.~Gaines},  
\emph{An efficient approximation method for stochastic
differential equations by means of the exponential
Lie series},
Math. Comp. Simulation,
38 (1995), pp.~13--19.

\bibitem{Ch} \textsc{K.~T. Chen},
\emph{Integration of paths, geometric invariants and 
a generalized Baker--Hausdorff formula},
Annals of Mathematics, 65(1) (1957), pp.~163--178.

\bibitem{CRSN} \textsc{V.~Chickarmane, A.~Ray, H.~M. Sauro and A.~Nadim},
\emph{A model for p53 dynamics triggered by DNA damage},
SIAM J. Applied Dynamical Systems, 6(1) (2007), pp.61--78.

\bibitem{CrGr} \textsc{P.~E. Crouch and R.~Grossman},
\emph{Numerical integration of ordinary differential
equations on manifolds},
J. Nonlinear Sci., 3 (1993), 
pp.~1--33.

\bibitem{EDS} \textsc{O.~Egeland, M.~Dalsmo and O.~J. S\o rdalen},
\emph{Feedback control of a nonholonomic underwater vehicle
with a constant desired configuration},
The International Journal of Robotics Research, 
15(1) (1996),
pp.~24--35.

\bibitem{E} \textsc{K.~D. Elworthy},
\emph{Stochastic differential equations on manifolds},
London Mathematical Society Lecture Note Series 70,
Cambridge University Press, 1982.

\bibitem{Emery} \textsc{M.~Emery},
\emph{Stochastic Calculus on manifolds},
Universitext, Springer--Verlag, 1989.

\bibitem{Epaper} \sameauthor,
\emph{On two transfer principles in stochastic differential geometry},
S\'eminaire de probabilit\'es (Strasbourg), 24 (1990), 
pp.~407--441.

\bibitem{EP} \textsc{A.~Estrade and M.~Pontier},
\emph{Backward stochastic differential equations in a Lie group},
S\'eminaire de probabilit\'es (Strasbourg), 35 (2001),
pp.~241--259.

\bibitem{FT}  \textsc{D.~Filipovi\'c and J.~Teichmann},
\emph{On the geometry of the term structure of interest rates},
Proc. R. Soc. Lond. A, 460 (2004), pp.~129--167.

\bibitem{F} \textsc{P.~Friz}, 
\emph{Continuity of the It\^o-map for H\"older
rough paths with applications to the support theorem
in H\"older norm}, arXiv:math.PR/0304501 v2, 2003.

\bibitem{FV} \textsc{P.~Friz and N.~ Victoir},
\emph{Euler estimates for rough differential equations},
Preprint, 2007.

\bibitem{GL} \textsc{J.~G. Gaines and T.~J. Lyons},  
\emph{Variable step size control in the numerical solution
of stochastic differential equations},
SIAM J. Appl. Math., 
57(5) (1997), 
pp.~1455--1484.

\bibitem{GS} \textsc{I.~I. Gihman, and A.~V. Skorohod},  
\emph{The theory of stochastic processes III},
Springer, 1979.

\bibitem{HLW} \textsc{E.~Hairer, C.~ Lubich and G.~ Wanner},
\emph{Geometric Numerical Integration}, Springer Series
in Computational Mathematics, 2002.

\bibitem{Helgason} \textsc{S.~Helgason},
\emph{Differential geometry, Lie groups, and symmetric
spaces}, Academic Press, 1978.

\bibitem{HJL} \textsc{P.~Holmes, J.~Jenkins and N.~E. Leonard},
\emph{Dynamics of the Kirchoff Equations I: coincident centers 
of gravity and bouyancy}, Phys. D, 118 (1998), 
pp.~311--342.

\bibitem{IMNZ} \textsc{A.~Iserles, H.~Z. Munthe-Kaas, 
S.~P. N\o rsett, and A.~Zanna}, 
\emph{Lie-group methods}, 
Acta Numer., (2000), 
pp.~215--365.

\bibitem{IZ} \textsc{A.~Iserles and A.~Zanna}, 
\emph{Efficient computation of the matrix exponential
by generalized polar decompositions},
SIAM J. Numer. Anal.,
42(5) (2005), pp.~2218--2256.

\bibitem{KP} \textsc{P.~E. Kloeden and E.~Platen}, 
\emph{Numerical solution of stochastic differential equations}, 
Springer, 1999.

\bibitem{KT} \textsc{M.~Kohlmann and S.~Tang}, 
\emph{Multidimensional backward stochastic 
Riccati equations and applications},
SIAM J. Control Optim., 
41(6) (2003), pp.~1696--1721.

\bibitem{Ku1980} \textsc{H.~Kunita}, 
\emph{On the representation of solutions
of stochastic differential equations},
LNM 784,
Springer--Verlag, 1980,
pp.~282--304.

\bibitem{Ku1990} \sameauthor, 
\emph{Stochastic flows and stochastic differential equations}, 
Cambridge University Press, 1990.

\bibitem{LMW} \textsc{G.~Lord, S.~J.A. Malham and A.~Wiese}, 
\emph{Efficient strong integrators for linear stochastic systems},
2006, Submitted.

\bibitem{L} \textsc{T.~Lyons}, 
\emph{Differential equations driven by rough signals},
Rev. Mat. Iberoamericana,
14(2) (1998), 
pp.~215--310.

\bibitem{LQ} \textsc{T.~Lyons and Z.~Qian},
\emph{System control and rough paths},
Oxford University Press, 2002.

\bibitem{Magnus} \textsc{W.~Magnus},  
\emph{On the exponential solution of differential
equations for a linear operator},
Comm. Pure Appl. Math.,
7 (1954), pp.~649--673.

\bibitem{MW} \textsc{S.~J.A. Malham and A.~Wiese},
\emph{Universal optimal stochastic expansions}, 
2007, Preprint. 

\bibitem{Malliavin} \textsc{P.~Malliavin}, 
\emph{Stochastic analysis}, 
Grundlehren der mathematischen Wissenschaften 313,
Springer, 1997.

\bibitem{MR} \textsc{J.~E. Marsden and T.~S. Ratiu},
\emph{Introduction to mechanics and symmetry},
Second edition, Springer, 1999.

\bibitem{MT} \textsc{G.~N. Milstein and M.~V. Tretyakov},
\emph{Stochastic numerics for mathematical physics},
Springer, 2004.

\bibitem{Mi} \textsc{T.~Misawa}, 
\emph{A Lie algebraic approach to numerical integration of
stochastic differential equations},
SIAM J. Sci. Comput., 
23(3) (2001), 
pp.~866--890.

\bibitem{MK} \textsc{H.~Munthe-Kaas},  
\emph{High order Runge--Kutta methods on manifolds},
Appl. Numer. Math.,
{29} (1999), pp.~115--127.

\bibitem{N} \textsc{N.~J. Newton},
\emph{Asymptotically efficient Runge--Kutta methods for a class
of It\^o and Stratonovich equations},
SIAM J. Appl. Math., 
51 (1991), pp.~542--567.

\bibitem{O2} \textsc{P.~J. Olver},
\emph{Equivalence, invariants, and symmetry},
Cambridge University Press, 1995.

\bibitem{SS} \textsc{J.~Schiff and S.~Shnider}, 
\emph{A natural approach to the numerical integration
of Riccati differential equations},
SIAM J. Numer. Anal., 
36(5) (1999), 
pp.~1392--1413.

\bibitem{SMG} \textsc{A.~Srivastava, M.~I. Miller and U.~Grenander},
\emph{Jump-diffusion processes on matrix Lie groups for Bayesian inference},
preprint, 2000.

\bibitem{S} \textsc{R.~S. Strichartz}, 
\emph{The Campbell--Baker--Hausdorff--Dynkin formula
and solutions of differential equations},
J. Funct. Anal., 72 (1987), pp.~320--345.

\bibitem{Su} \textsc{H.~J. Sussmann}, 
\emph{Product expansions of 
exponential Lie series and the discretization of
stochastic differential equations,
in Stochastic Differential Systems}, 
Stochastic Control Theory, and Applications, 
W.~Fleming and J.~Lions, eds.,
Springer IMA Series, Vol. 10 (1988), pp.~563--582.

\bibitem{V} \textsc{V.~S. Varadarajan}, 
\emph{Lie groups, Lie algebras, and their representations}, 
Springer, 1984.

\bibitem{W} \textsc{F.~W. Warner},
\emph{Foundations of differentiable manifolds and Lie groups},
Graduate Texts in Mathematics, Springer--Verlag, 1983.

\bibitem{Y} \textsc{Y.~Yamato}, 
\emph{Stochastic differential equations and nilpotent Lie algebras},
Z. Wahrsch. Verw. Gebiete, 47(2) (1979), pp~213--229.


\end{thebibliography}
\end{document}